\documentclass[10pt,letterpaper]{amsart}
\numberwithin{equation}{section}
\numberwithin{figure}{section}

\usepackage[T1]{fontenc}
\usepackage[utf8]{inputenc}
\usepackage{amssymb}
\usepackage{amsmath}
\usepackage{mathtools}
\usepackage[headings]{fullpage}
\usepackage{thmtools}
\makeatletter
\renewcommand\thmt@autorefsetup{%
  \@xa\def\csname\thmt@envname autorefname\@xa\endcsname\@xa{\thmt@thmname}%
}
\makeatother
\usepackage[dvipsnames]{xcolor}
\usepackage[hidelinks]{hyperref}
\hypersetup{
    colorlinks=true,
    linkcolor=blue,
    urlcolor=red,
    citecolor=Green
}
\DeclareTextFontCommand{\emph}{\color{blue}\em}
\usepackage{enumitem}

\theoremstyle{plain}
\newtheorem{theorem}[equation]{Theorem}
\newtheorem{lemma}[equation]{Lemma}
\newtheorem{corollary}[equation]{Corollary}

\theoremstyle{definition}

\newtheorem{remark}[equation]{Remark}

\newtheorem{example}[equation]{Example}

\newtheorem{definition}[equation]{Definition}

\newtheorem{notation}[equation]{Notation}

\newcommand{\ZZ}{\mathbb{Z}}
\newcommand{\QQ}{\mathbb{Q}}
\newcommand{\FF}{\mathbb{F}}
\newcommand{\LL}{\Lambda_R}

\DeclareMathOperator{\Top}{top}
\newcommand{\Split}{\operatorname{Split}}

\usepackage[backend=biber, style=alphabetic, maxbibnames=999, maxalphanames=999, doi=true, url=false]{biblatex}
\DeclareLabelalphaTemplate{%
  \labelelement{\field[final]{shorthand}}%
  \labelelement{\field[uppercase,strwidth=1,strside=left]{labelname}}%
  \labelelement{\field[strwidth=2,strside=right]{year}\field{extrayear}}%
}
\AtBeginDocument{%
}

\title[Kleber's Conjecture and Complementary Products]{Kleber's conjecture and complementary products of symmetric functions}

\date{July 13, 2026}

\author{Reuven Hodges}
\address{Department of Mathematics, University of Kansas, Lawrence, KS, 66045, USA}
\email{rmhodges@ku.edu}
\author{Hanzhang Yin}
\address{Department of Mathematics, University of Kansas, Lawrence, KS, 66045, USA}
\email{hanyin@ku.edu}
\subjclass[2020]{Primary 05E05; Secondary 05E10, 05E14.}

\begin{document}

\begin{abstract}We prove Kleber's rectangular-complement conjecture for Schur functions over
an arbitrary commutative ring $R$, showing that, for a fixed rectangle, the
products $s_\lambda s_{\lambda^\vee}$, indexed by unordered complementary
pairs, are linearly independent in $\Lambda_R$.  The proof rests on a general independence theorem for componentwise
splittings, which asserts that for every partition $\theta$, the products
$s_\alpha s_\beta$ are linearly independent as $\{\alpha,\beta\}$ ranges over
unordered pairs of partitions satisfying $\alpha+\beta=\theta$.  The independence of the products $s_\lambda s_{\lambda^\vee}$ also yields
linear independence of the Koike--Terada universal-character
products over any field, answering a question of Gao--Orelowitz--Yong. We also prove the analogous result for monomial symmetric functions over
fields of characteristic zero, as well as integral linear independence over $\ZZ$.\end{abstract}

\maketitle
\vspace{-8pt}

\section{Introduction}

Schur functions are one of the central bases of the ring of symmetric
functions.  They arise naturally as characters of polynomial representations
of general linear groups, and for partitions $\lambda$ and $\mu$,
multiplication of the corresponding Schur functions defines the
Littlewood--Richardson coefficients $c^\nu_{\lambda,\mu}$ by
\[
s_\lambda s_\mu=\sum_\nu c^\nu_{\lambda,\mu}s_\nu .
\]
Their systematic study goes back to Littlewood and Richardson's work on group
characters and symmetric functions \cite{LittlewoodRichardson1934}.  These
coefficients lie at a meeting point of algebraic combinatorics, representation
theory, and geometry.  They are computed by the Littlewood--Richardson rule and
by crystal, hive, honeycomb, and puzzle models, while the same integers are
tensor-product multiplicities for polynomial representations of $GL_n$ and
structure constants for the cohomology of Grassmannians
\cite{Macdonald1995,Fulton1997,KnutsonTao1999,KnutsonTaoWoodward2004}.
Two basic questions about these expansions are which Schur functions occur and
with what multiplicities.  The Horn inequalities and saturation theorem
characterize nonvanishing \cite{Klyachko1998,KnutsonTao1999}, while
multiplicity-free Schur products have been classified
\cite{Stembridge2001}.  Considering these expansions collectively leads to questions about
linear relations among the products.  This paper studies a linear-independence phenomenon for one distinguished
class of Schur products, namely those obtained by pairing a partition with its
complement in a rectangle.

Let $(b^a)$ denote the $a\times b$ rectangle.  For a partition $\lambda\!=\!(\lambda_1, ..., \lambda_a)\!\subseteq\!(b^a)$, define $\lambda^\vee=(b-\lambda_a,\ldots,b-\lambda_1)$. Equivalently, $\lambda^\vee$ is obtained from the complement of $\lambda$ in the rectangle by rotation through $180^\circ$.  The map $\lambda\mapsto\lambda^\vee$ is an involution, and its orbits are the unordered complementary pairs $\{\lambda,\lambda^\vee\}$.

The same involution is familiar from the Schubert calculus of the Grassmannian $\operatorname{Gr}(a,a+b)$. Under the usual identification of Schubert classes with partitions contained in $(b^a)$, the class $\sigma_{\lambda^\vee}$ is Poincar\'e dual to $\sigma_\lambda$, so the image of the stable product $s_\lambda s_{\lambda^\vee}$ in the Grassmannian quotient is the point class. Kleber's conjecture asserts that this equality of images is not the shadow of a linear dependence in the ring of symmetric functions. Kleber proved the self-complementary case in 2002~\cite{Kleber2002}.  Despite its disarmingly simple statement, the conjecture has remained open for more than two decades; our first theorem establishes the conjectured independence over every commutative ring.

\begin{restatable}[Kleber's conjecture]{theorem}{thmB} \label{thm-rectangular} Let $R$ be a commutative ring. In $\Lambda_R$, for all $a,b\in\ZZ_{>0}$, the products
\[
s_\lambda s_{\lambda^\vee},\qquad \lambda\subseteq (b^a),
\]
indexed by unordered complementary pairs, are linearly independent over $R$. \end{restatable}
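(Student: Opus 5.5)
Theorem~\ref{thm-rectangular} should follow from the general independence theorem for componentwise splittings described in the abstract. The key observation is that every complementary pair is a componentwise splitting of one fixed partition. For $\lambda\subseteq(b^a)$, view $\lambda$ as a weakly decreasing vector of length $a$ (padding with zeros). Then $\lambda^\vee$ is the vector $(b-\lambda_a,\ldots,b-\lambda_1)$, which is not componentwise $b-\lambda_i$ but is reversed. So the first step is to find the right $\theta$. The sum $\lambda+\lambda^\vee$ is $\theta_\lambda=(\lambda_1+b-\lambda_a,\ \ldots,\ \lambda_a+b-\lambda_1)$, and this depends on $\lambda$. A direct reduction to a single $\theta=(b^a)$ therefore fails, and I will instead organize the argument by the partition $\theta_\lambda$.

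The plan is to group the complementary pairs according to the partition $\theta=\lambda+\lambda^\vee$. This is well defined on unordered pairs, since $\lambda^\vee+\lambda^{\vee\vee}=\lambda^\vee+\lambda$. Within a fixed group, the pairs $\{\lambda,\lambda^\vee\}$ are distinct unordered pairs $\{\alpha,\beta\}$ with $\alpha+\beta=\theta$. The componentwise-splitting theorem then gives linear independence over $R$ within each group.

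To combine the groups, I would use a triangularity argument with respect to dominance order. Since $c^\nu_{\alpha,\beta}\neq 0$ forces $\nu\le \alpha+\beta$ in dominance order, and $c^{\alpha+\beta}_{\alpha,\beta}=1$, each product expands as
\[
s_\lambda s_{\lambda^\vee}=s_{\theta_\lambda}+\sum_{\nu<\theta_\lambda}(\ast)\,s_\nu .
\]
Suppose $\sum c_{\{\lambda,\lambda^\vee\}}s_\lambda s_{\lambda^\vee}=0$ over $R$. Choose a $\theta$ that is maximal in dominance order among those $\theta_\lambda$ carrying a nonzero coefficient. Groups whose $\theta'$ is not $\ge\theta$ do not contribute to the components $s_\nu$ with $\nu$ in the up-set above... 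At this point a pure leading-term argument is too weak, because many pairs share the same $\theta$ and hence the same leading term.

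The main obstacle is therefore separating the groups using more than leading terms. My first attempt would be to rerun the mechanism of the componentwise-splitting theorem on the whole family at once, rather than using only its statement. Presumably that proof works with a filtration or a distinguished term attached to each pair, for example the lexicographically extreme $s_\nu$ in $s_\alpha s_\beta$ beyond $s_{\alpha+\beta}$, or a column-sum $s_{\alpha\cup\beta}$ term. I would check that this distinguished-term assignment stays injective across different $\theta$. Concretely, the candidate is $\nu=\lambda\cup\lambda^\vee$, the sorted union of parts, which is the dominance-minimal constituent and has coefficient $1$. I would verify that the pair $(\lambda+\lambda^\vee,\ \lambda\cup\lambda^\vee)$ determines $\{\lambda,\lambda^\vee\}$. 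Then I would order the pairs so that the resulting matrix is unitriangular with $\pm1$ diagonal, which gives independence over an arbitrary commutative ring $R$ directly.

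If that injectivity fails, the fallback is to induct on maximal $\theta$. Modulo the span of the $s_\nu$ with $\nu$ not $\le\theta'$ for the other maximal $\theta'$, I would isolate one group and apply the splitting theorem in its refined form, which may assert independence modulo lower terms.
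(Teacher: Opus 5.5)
Your grouping matches the paper's: since $(\lambda+\lambda^\vee)_i=b+(\lambda_i-\lambda_{a+1-i})$, the partition $\theta_\lambda$ carries the same information as $\operatorname{gap}(\lambda)$. Within a single group, Theorem~\ref{thm-core} does give independence. The real difficulty is separating the groups, and neither of your routes does this.

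The unitriangular route fails at the injectivity check. In the $3\times3$ rectangle, take $\lambda=(3,2,1)$, $\lambda^\vee=(2,1)$ and $\mu=(3,1,1)$, $\mu^\vee=(2,2)$. These are distinct pairs with $\lambda+\lambda^\vee=\mu+\mu^\vee=(5,3,1)$ and $\lambda\cup\lambda^\vee=\mu\cup\mu^\vee=(3,2,2,1,1)$. So the two products share both extreme constituents, each with coefficient $1$. Even an injective labelling would also need an order compatible with the two-sided dominance bounds, which you do not provide.

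The fallback does not work as stated either. A dominance-maximal $\theta$ does not isolate its group, because groups whose $\theta''$ is strictly dominated by $\theta$ still contribute to the same $s_\nu$. The hoped-for ``refined'' splitting theorem is also false for general splitting families. Modulo dominance-lower terms, every product in a group reduces to $s_\theta$. Even after the single projection $\pi_{\theta_1}$ independence can fail: for $\theta=(2,1)$ one has $\pi_2(s_{(2,1)}s_\varnothing)=s_{(1)}=\pi_2(s_{(1,1)}s_{(1)})$.

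The missing idea is to use lexicographic order together with iterated first-row projections. Take $g$ lexicographically maximal among gap vectors with nonzero coefficient. Equivalently, take $\theta$ lex-maximal, since $\theta$ is determined by its first $m=\lfloor a/2\rfloor$ parts $b+g_i$. Then apply $\Pi_g=\pi_{b+g_m}\circ\cdots\circ\pi_{b+g_1}$.
\begin{itemize}
\item The first-row bound of Lemma~\ref{lem-row-removal}(i), applied after each row removal, kills every product with lexicographically smaller gap.
\item Lemma~\ref{lem-top-row-projection}, applied $m$ times, sends each product in the top group to $s_{\lambda^{\downarrow m}}s_{(\lambda^\vee)^{\downarrow m}}$.
\item These images are splittings of $\Theta_g=\theta^{\downarrow m}$, and they are pairwise distinct by Lemma~\ref{lem-fixed-gap-injective}. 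That injectivity uses the complement structure, which is exactly what fails for the general splitting family above.
\end{itemize}
Theorem~\ref{thm-core} is then applied to $\Theta_g$, not to $\theta$, to reach the contradiction.
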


The proof of Theorem~\ref{thm-rectangular} proceeds through a complementary problem that is independent of any ambient rectangle.  For a partition $\theta$, a \emph{$\theta$-splitting} is an unordered pair $\{\alpha,\beta\}$ of partitions such that $\alpha+\beta=\theta$ componentwise.  Let $\Split(\theta)$ denote the set of all $\theta$-splittings.

\begin{restatable}{theorem}{thmA} \label{thm-core} Let $R$ be a commutative ring, and let $\theta$ be a partition.  In $\Lambda_R$, the products
\[
s_\alpha s_\beta,
\qquad
\{\alpha,\beta\}\in\Split(\theta),
\]
are linearly independent over $R$. \end{restatable}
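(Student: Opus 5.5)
The plan is to prove Theorem~\ref{thm-core} by a unitriangularity argument. I will attach to each splitting $\{\alpha,\beta\}\in\Split(\theta)$ a Schur index $\nu(\alpha,\beta)$ with two properties: $c^{\nu(\alpha,\beta)}_{\alpha,\beta}=1$, and $c^{\nu(\alpha,\beta)}_{\alpha',\beta'}=0$ for every splitting $\{\alpha',\beta'\}$ that is larger in a fixed total order on $\Split(\theta)$. The Littlewood--Richardson coefficients are integers and the resulting square minor is unitriangular. Hence the minor is invertible over $\ZZ$, and therefore over every commutative ring $R$, which gives independence in $\Lambda_R$ at once. So the whole problem is combinatorial, and the ring $R$ plays no role.

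The leading term $s_{\alpha+\beta}=s_\theta$ is common to all products, so it is useless here. I will look instead at the other end of the support. For any $\alpha,\beta$, the partition $\alpha\cup\beta$ (all parts of $\alpha$ and $\beta$ together, sorted) is the dominance-minimal constituent of $s_\alpha s_\beta$, with coefficient $1$. Every constituent $\nu$ of $s_{\alpha'}s_{\beta'}$ satisfies $\alpha'\cup\beta'\trianglelefteq\nu$. Ordering splittings by reverse dominance of $\alpha\cup\beta$ therefore handles all pairs whose unions differ. The obstacle is that the union does not determine the splitting. For $\theta=(3,1)$, both $\{(2,1),(1)\}$ and $\{(2),(1,1)\}$ have union $(2,1,1)$. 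So the first-order invariant must be refined inside each fibre of $\{\alpha,\beta\}\mapsto\alpha\cup\beta$.

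To separate a fibre, I will use skewing operators, which act on products through the coproduct: $f^\perp(s_\alpha s_\beta)=\sum (f_{(1)}^\perp s_\alpha)(f_{(2)}^\perp s_\beta)$. I will also induct on $|\theta|$, or on the number of columns $\theta_1$. Suppose there were a dependence $\sum c_{\{\alpha,\beta\}}s_\alpha s_\beta=0$. By the minimal-term argument, the nonzero coefficients may be assumed to lie in a single fibre with union $\mu$. Next I apply a suitable operator, for example $e_k^\perp$ with $k$ the number of rows, or alternatively the specialization killing one variable. This strips the first column of each factor and maps $s_\alpha s_\beta$ to a signed sum whose top piece is $s_{\bar\alpha}s_{\bar\beta}$, where $\bar{\ }$ removes the first column and $\bar\alpha+\bar\beta=\bar\theta$. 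The lower pieces are products for splittings of strictly smaller shapes, which I control by the same union-minimality.

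By induction, the splittings of $\bar\theta$ have independent products, so the coefficients of the fibre vanish. The exception is splittings that collapse to the same $\{\bar\alpha,\bar\beta\}$ after column removal. Within a fixed union $\mu$, these differ only in how the column lengths $\ell(\alpha),\ell(\beta)$ are distributed. I expect to separate them by comparing the lengths $\ell(\alpha)$ versus $\ell(\beta)$ with the second-minimal term $(\alpha\cup\beta)$ with one box moved.

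The main obstacle is this fibre analysis: showing rigorously that a splitting is determined by the pair consisting of its union and its column-stripped splitting, and that the correction terms are triangular with respect to the chosen order. If it resists, my fallback is a finer single statistic that is injective on $\Split(\theta)$ and realized with coefficient $1$. The candidate is $\nu$ with rows $\max(\alpha_i,\beta_i),\min(\alpha_i,\beta_i)$ interleaved, sorted if needed. I would verify its triangularity directly from the Littlewood--Richardson rule by a lattice-word count.
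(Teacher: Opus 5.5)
Your unitriangularity framework is valid in principle, and the facts you cite about $\alpha\cup\beta$ are correct. But the proposal never produces the injective, triangular statistic, and the steps meant to produce it fail.

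First, the minimal-term argument does not let you assume the relation is supported on a single fibre. Take $\mu$ dominance-minimal among the unions with nonzero coefficient. Extracting $[s_\mu]$ gives only the single equation $\sum_{\alpha\cup\beta=\mu}c_{\{\alpha,\beta\}}=0$. The other constituents of products in that fibre can also occur in products from higher fibres, so those fibres cannot be discarded.

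Second, column stripping does not land in splittings of a fixed $\bar\theta$. Since $e_k^\perp(s_\alpha s_\beta)=\sum_{i+j=k}(e_i^\perp s_\alpha)(e_j^\perp s_\beta)$, both first columns are stripped only when $k=\ell(\alpha)+\ell(\beta)$. The stripped pair then sums to $\theta-(1^{\ell(\alpha)})-(1^{\ell(\beta)})$, which depends on the lengths.

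More seriously, the claim you call the crux is false on your own example. That claim says a splitting is determined by its union together with its column-stripped splitting. For $\theta=(3,1)$, both $\{(1),(2,1)\}$ and $\{(2),(1,1)\}$ have union $(2,1,1)$, and both strip to $\{\varnothing,(1)\}$. Indeed $e_3^\perp(s_1s_{21})=e_3^\perp(s_2s_{11})=s_1$.

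The fallback fails as well. The multiset $\{\max(\alpha_i,\beta_i),\min(\alpha_i,\beta_i)\}$ is just the multiset of all parts of $\alpha$ and $\beta$. Sorted, it is $\alpha\cup\beta$ again, which is not injective. Unsorted, it is usually not a partition; e.g.\ $\alpha=(3,2)$, $\beta=(1,1)$ gives $3,1,2,1$.

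What is missing is a device that breaks the $\alpha\leftrightarrow\beta$ symmetry and pins down how $\theta_1$ is divided between $\alpha_1$ and $\beta_1$. Plain first-row removal does not do this: it sends all three $(3,1)$-splittings to $\{\varnothing,(1)\}$.

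The paper differentiates a hypothetical relation with respect to $h_n$. Here $n$ is the largest value of $\delta_1+\ell(\delta)-1$ over factors $\delta$ with nonzero coefficient. By Jacobi--Trudi, $s_\delta$ involves no $h_k$ with $k>n$, and it is linear in $h_n$ with coefficient $(-1)^{\ell(\delta)-1}s_{\partial\delta}$, where $\partial\delta$ removes the first row and first column.

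This produces an ordered family $s_\gamma s_{\partial\delta}$ with $\delta_1+\ell(\delta)-1=n$ fixed. In that family, iterated first-row projection and row removal work, because $\delta_1=n-\ell(\delta)+1$ makes the map $(\gamma,\delta)\mapsto(\bar\gamma,\bar\delta)$ injective.

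The only case left is a self-pair $\tau+\tau=\theta$, whose derivative carries a factor $2$. The paper settles it by extracting $[h_n^2]_{\mathrm p}$, which gives $c\,s_{\partial\tau}^2=0$. Then $c=0$ because $c^{\partial\tau+\partial\tau}_{\partial\tau,\partial\tau}=1$.
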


Theorem~\ref{thm-core} implies the rectangular theorem by separating rectangular complementary pairs according to the differences between opposite rows.  These differences are unchanged by rectangular complementation.  For a fixed value of this invariant, removing the corresponding forced rows turns a rectangular complementary pair into a splitting of a single partition.  Thus, after choosing a maximal invariant in any hypothetical relation, one obtains a nontrivial relation of the type excluded by Theorem~\ref{thm-core}.

The proof of Theorem~\ref{thm-core} starts from a hypothetical relation among the products indexed by $\theta$-splittings and differentiates with respect to a carefully chosen complete homogeneous generator.  This produces a simpler ordered family, whose independence follows by row removal for Littlewood--Richardson coefficients.  A separate argument handles the only case in which the differentiation does not isolate a coefficient, allowing the argument to remain integral throughout.

The rectangular theorem also gives a Newell--Littlewood consequence.  Let $s_{[\lambda]}$ denote the Koike--Terada universal character associated to $\lambda$.  Over a field $\FF$, these elements form a basis of $\Lambda_\FF$. With respect to the total-degree filtration
\[
F_{\le d}\Lambda_\FF=\bigoplus_{0\le e\le d}(\Lambda_\FF)_e,
\]
the class of $s_{[\lambda]}$ in $F_{\le |\lambda|}\Lambda_\FF/F_{\le |\lambda|-1}\Lambda_\FF$ is the class of $s_\lambda$.  The Newell--Littlewood numbers are the structure constants of the Koike--Terada basis \cite[Section~1.2]{GaoOrelowitzYong2021}.  Gao, Orelowitz, and Yong asked whether the products $s_{[\lambda]}s_{[\lambda^\vee]}$ indexed by complementary pairs in a rectangle are linearly independent, and observed that Kleber's conjecture implies an affirmative answer \cite[Section~7.2]{GaoOrelowitzYong2021}.  We obtain this as a direct corollary of Theorem~\ref{thm-rectangular}.

\begin{restatable}{corollary}{corKT} \label{cor-koike-terada} Let $\FF$ be a field.  For every rectangle $(b^a)$, the products
\[
s_{[\lambda]}s_{[\lambda^\vee]},
\qquad
\lambda\subseteq (b^a),
\]
indexed by unordered complementary pairs, are linearly independent in $\Lambda_\FF$. \end{restatable}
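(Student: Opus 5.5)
The plan is to deduce the corollary from Theorem~\ref{thm-rectangular} (with $R=\FF$) by a leading-term argument in the total-degree filtration. The key observation is that every product $s_{[\lambda]}s_{[\lambda^\vee]}$ with $\lambda\subseteq(b^a)$ has the same filtration degree $|\lambda|+|\lambda^\vee|=ab$. Its leading term there is $s_\lambda s_{\lambda^\vee}$.

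First I will confirm the leading-term claim. By the stated property of the Koike--Terada basis,
\[
s_{[\lambda]}\in s_\lambda+F_{\le|\lambda|-1}\Lambda_\FF .
\]
The filtration is multiplicative, $F_{\le d}F_{\le e}\subseteq F_{\le d+e}$, because $\Lambda_\FF$ is graded. Hence
\[
s_{[\lambda]}s_{[\lambda^\vee]}\in s_\lambda s_{\lambda^\vee}+F_{\le ab-1}\Lambda_\FF .
\]
Here $s_\lambda s_{\lambda^\vee}$ is homogeneous of degree $ab$.

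Next, suppose there is a relation
\[
\sum c_{\{\lambda,\lambda^\vee\}}\,s_{[\lambda]}s_{[\lambda^\vee]}=0,
\]
with the sum over unordered complementary pairs and coefficients $c\in\FF$. Note that each pair gives a well-defined product, since multiplication is commutative. I project onto the degree-$ab$ homogeneous component, or equivalently pass to $F_{\le ab}/F_{\le ab-1}\cong(\Lambda_\FF)_{ab}$. This gives
\[
\sum c_{\{\lambda,\lambda^\vee\}}\,s_\lambda s_{\lambda^\vee}=0 .
\]
Theorem~\ref{thm-rectangular} applied with $R=\FF$ then forces every $c$ to vanish.

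I expect no real obstacle. The only points needing care are the degree convention, namely that $s_{[\lambda]}$ has no components of degree exceeding $|\lambda|$ (which the filtration statement encodes), and that the indexing by unordered pairs matches exactly between the two statements. The field hypothesis is used only so that the Koike--Terada elements and their leading-term property are available as stated.
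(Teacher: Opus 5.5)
Your proposal is correct and is essentially the paper's own proof. Both take the degree-$ab$ homogeneous component of a putative relation, use that the top component of $s_{[\lambda]}s_{[\lambda^\vee]}$ is $s_\lambda s_{\lambda^\vee}$, and conclude with Theorem~\ref{thm-rectangular} applied with $R=\FF$.
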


Finally, the same rectangular question can be posed in the monomial basis, with $s_\lambda$ replaced by the monomial symmetric function $m_\lambda$.  We prove the corresponding independence statement by an argument independent of the Schur-function proof; it applies over fields of characteristic zero.  The proof passes to the power-sum basis, where the largest power-sum generator appearing in $m_\lambda$ records $|\lambda|$ and allows one to isolate complementary factors in any putative relation.

\begin{theorem} \label{thm-monomial} Let $\FF$ be a field of characteristic zero.  For every rectangle $(b^a)$, the products
\[
m_\lambda m_{\lambda^\vee},
\qquad
\lambda\subseteq (b^a),
\]
indexed by unordered complementary pairs, are linearly independent in $\Lambda_\FF$. The same products are $\ZZ$-linearly independent in $\Lambda_{\ZZ}$. \end{theorem}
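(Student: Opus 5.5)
Following the sketch in the introduction, we pass to the power-sum basis, which is a basis of $\Lambda_\FF$ in characteristic zero, so $\Lambda_\FF=\FF[p_1,p_2,\ldots]$. Write $m_\lambda=\sum_\mu c_{\lambda\mu}p_\mu$. For $\lambda$ with $\ell(\lambda)=\ell$, the coefficient of the single generator $p_{|\lambda|}$ in $m_\lambda$ is $(-1)^{\ell-1}(\ell-1)!\,/\prod_i k_i!$, where $k_i$ are the multiplicities of the parts. This follows from the standard inverse expansion of augmented monomials in power sums via the Möbius function of the set-partition lattice: only the one-block partition contributes $p_{|\lambda|}$. Hence this coefficient is nonzero, and it is the unique monomial in $m_\lambda$ involving a generator of index $\ge|\lambda|$. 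More generally, every monomial $p_\mu$ in $m_\lambda$ has all parts $\le|\lambda|$. The empty partition gives $m_\emptyset=1$.

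Now suppose $\sum c_{\{\lambda,\lambda^\vee\}}m_\lambda m_{\lambda^\vee}=0$ is a nontrivial relation. We may restrict to a homogeneous degree, but all products already have degree $ab$. Let $N$ be the largest value of $\max(|\lambda|,|\lambda^\vee|)$ among pairs with nonzero coefficient. Since $|\lambda|+|\lambda^\vee|=ab$, we have $N\ge ab/2$.

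\textbf{Case $N>ab/2$.} Take the coefficient of $p_N$, viewing everything as a polynomial in $p_N$ with coefficients in the other generators. A pair contributes to the $p_N$-linear part only if one member has size $N$. Pairs whose larger member has size $N$ have that member unique, say $\lambda$ with $|\lambda|=N$. The coefficient of $p_N^1$ is then $\sum c\,\kappa_\lambda\, m_{\lambda^\vee}$ plus terms in which $p_N$ arises from the smaller factor. The latter is impossible, because the smaller factor has size $ab-N<N$. Here $\kappa_\lambda\neq 0$ is the constant above. So $\sum_{|\lambda|=N} c\,\kappa_\lambda m_{\lambda^\vee}=0$. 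Distinct pairs have distinct $\lambda^\vee$, and the $m_\mu$ are linearly independent, so all these $c=0$, contradicting the choice of $N$.

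\textbf{Case $N=ab/2$.} Now all remaining pairs have $|\lambda|=|\lambda^\vee|=N$. The coefficient of $p_N^2$ is $\sum c\,\kappa_\lambda\kappa_{\lambda^\vee}$, and the coefficient of $p_N^1$ is
$$\sum c\,(\kappa_\lambda \tilde m_{\lambda^\vee}+\kappa_{\lambda^\vee}\tilde m_\lambda),$$
where $\tilde m$ denotes $m$ with its $p_N$ term removed. This is the main obstacle: one must show that these symmetrized combinations are independent. I would rewrite it as $\sum_\mu d_\mu\tilde m_\mu=0$ with $d_\mu=c_{\{\mu,\mu^\vee\}}\kappa_{\mu^\vee}$ (doubled if $\mu=\mu^\vee$). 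The $\tilde m_\mu$ for $|\mu|=N$ with $\ell(\mu)\ge1$ and $N>1$ are linearly independent. Indeed, any dependence $\sum d_\mu m_\mu = (\sum d_\mu\kappa_\mu)p_N$ would make $p_N$ a combination of the $m_\mu$, which is true ($p_N=m_{(N)}$). So we only need to handle the possible relation involving $m_{(N)}$. Since $\tilde m_{(N)}=0$, the remaining $\tilde m_\mu$, $\mu\neq(N)$, are independent. This forces $d_\mu=0$ for $\mu\neq (N)$. The pair $\{(N),(N)^\vee\}$ is then handled by the $p_N^2$ coefficient, or directly, since it is the single survivor. Degenerate sizes ($N=1$, or $N=0$, where $ab=0$ is excluded) are checked by hand.

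For $\ZZ$-independence, a $\ZZ$-relation is a $\QQ$-relation in $\Lambda_\QQ\supseteq\Lambda_\ZZ$, so it is trivial by the characteristic-zero case.
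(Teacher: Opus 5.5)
Your proposal is correct and follows essentially the same route as the paper. The shared steps are the nonzero coefficient $\kappa_\lambda$ of $p_{|\lambda|}$ in $m_\lambda$ via M\"obius inversion on the set-partition lattice, and extraction of the $p_N$-linear coefficient at the maximal size. In the middle case $N=ab/2$ you also work modulo $\FF p_N$ (your $\tilde m_\mu$ are the paper's $Q_\mu$) and treat the pair $\{(N),(N)\}$ last.

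One small slip: your sentence asserting that all $\tilde m_\mu$ with $|\mu|=N$ are independent is false as written, since $\tilde m_{(N)}=0$. Your corrected claim for $\mu\neq(N)$ follows from $\sum_\mu d_\mu m_\mu=\bigl(\sum_\mu d_\mu\kappa_\mu\bigr)m_{(N)}$ and the $m$-basis, and is exactly what the paper uses via the basis of $(\Lambda_\FF)_N/\FF p_N$.
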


\subsection{Outline}

Section~\ref{sec-prelim} fixes the symmetric-function conventions used throughout, records the coefficient-extraction notation, and proves the first-row and Jacobi--Trudi tools needed later.  Section~\ref{sec-core} proves Theorem~\ref{thm-core} by reducing a hypothetical splitting relation to a derivative-reduced family and then treating the possible maximal self-pair separately. Section~\ref{sec-rectangular} introduces gap vectors and fixed-gap projections, uses them to deduce Theorem~\ref{thm-rectangular} from Theorem~\ref{thm-core}, and proves Corollary~\ref{cor-koike-terada}. Section~\ref{sec-monomial} proves Theorem~\ref{thm-monomial} using M\"obius inversion on the partition lattice and coefficient extraction in the power-sum coordinates.

\section*{Acknowledgements}

We thank Ian Cavey, to whom Kleber's conjecture had been communicated by
Alexander Yong, for bringing the problem to our attention, for helpful
discussions, and for suggesting the analogous question for monomial symmetric
functions.  We also thank Linus Setiabrata for helpful discussions and
Alexander Yong for detailed comments on the manuscript.

\section{Preliminaries} \label{sec-prelim}

This section fixes our notation for symmetric functions and partitions, recalls the tableau form of the Littlewood--Richardson rule used below, and records a Jacobi--Trudi differential calculation needed in the proof of Theorem~\ref{thm-core}.

\subsection{Symmetric functions}

Fix a commutative ring $R$.  We follow the notation and conventions of Macdonald \cite[Chapter~I, \S 1--3]{Macdonald1995}.  Let $\Lambda_{\ZZ}$ be the ring of symmetric functions over $\ZZ$, and set
\[
\Lambda_R=R\otimes_{\ZZ}\Lambda_{\ZZ}.
\]
Unless another coefficient ring is specified, linear independence in $\Lambda_R$ is understood over $R$.  Integer scalars are regarded as elements of $R$ via the canonical map $\ZZ\to R$, $n\mapsto n\cdot 1_R$.  Since all of the results below are immediate when $R$ is the zero ring, we assume throughout that $R$ is nonzero.

A \emph{partition} is a finite weakly decreasing sequence $\lambda=(\lambda_1,\ldots,\lambda_k)$ of positive integers, $\lambda_1\ge\cdots\ge\lambda_k>0$.  We write $\ell(\lambda)=k$ for its length and $|\lambda|=\lambda_1+\cdots+\lambda_k$ for its size; the empty partition $\varnothing$ has length and size zero.  When an expression requires more parts than $\ell(\lambda)$, we pad $\lambda$ with trailing zeroes; conversely, trailing zero parts in a weakly decreasing nonnegative sequence are deleted when the sequence is regarded as a partition.  Thus $\lambda_i=0$ for $i>\ell(\lambda)$, and $\lambda\subseteq\mu$ means $\lambda_i\le\mu_i$ for all $i$.  Let $\bar\lambda=(\lambda_2,\lambda_3,\ldots)$ denote the deletion of the first part of $\lambda$, with $\bar\varnothing=\varnothing$. Lexicographic order on partitions is defined by declaring $\lambda<_{\mathrm{lex}}\mu$ if, at the first index $i$ for which $\lambda_i\ne\mu_i$, one has $\lambda_i<\mu_i$.

For a partition $\lambda$ with $\ell(\lambda)\le n$, write $\lambda^{(n)}=(\lambda_1,\ldots,\lambda_{\ell(\lambda)},0,\ldots,0)$ for the corresponding $n$-tuple.  The \emph{monomial symmetric polynomial} in $n$ variables is
\[
m_\lambda(x_1,\ldots,x_n)
=
\sum_{\alpha} x_1^{\alpha_1}\cdots x_n^{\alpha_n},
\]
where $\alpha$ runs over the distinct permutations of $\lambda^{(n)}$. These polynomials are compatible under the specialization $x_{n+1}=0$; their stable limit is the \emph{monomial symmetric function} $m_\lambda$. The monomial symmetric functions $\{m_\lambda\}$ form an $R$-basis of $\Lambda_R$.

For $r\ge1$, the \emph{complete homogeneous symmetric function} is
\[
h_r=\sum_{1\le i_1\le\cdots\le i_r}x_{i_1}\cdots x_{i_r},
\]
with $h_0=1$ and $h_r=0$ for $r<0$.  If $\lambda=(\lambda_1,\ldots,\lambda_k)$, set $h_\lambda=h_{\lambda_1}\cdots h_{\lambda_k}$, with $h_\varnothing=1$.  The functions $h_1,h_2,\ldots$ are algebraically independent, so $\Lambda_R=R[h_1,h_2,\ldots]$, and the $\{h_\lambda\}$ form an $R$-basis of $\Lambda_R$.

We define Schur functions as follows.  For $n\ge\ell(\lambda)$, the 
\emph{Schur polynomial} in $n$ variables is
\[
s_\lambda(x_1,\ldots,x_n)
=
\frac{\det\bigl(x_i^{\lambda_j+n-j}\bigr)_{1\le i,j\le n}}
     {\det\bigl(x_i^{n-j}\bigr)_{1\le i,j\le n}}.
\]
These polynomials are compatible under the specialization $x_{n+1}=0$, and their stable limit is the \emph{Schur function} $s_\lambda\in\Lambda_{\ZZ}$. The Schur functions $\{s_\lambda\}$ form an $R$-basis of $\Lambda_R$.

We shall use the Jacobi--Trudi identity \cite[Chapter~I, \S3, (3.4)]{Macdonald1995}, which gives
\begin{equation}\label{eq-jacobi-trudi}
s_\lambda
=
\det\bigl(h_{\lambda_i-i+j}\bigr)_{1\le i,j\le r},
\qquad r\ge \ell(\lambda).
\end{equation}

For $r\ge1$, the \emph{power-sum symmetric function} is
\[
p_r=\sum_i x_i^r=m_{(r)}.
\]
For a partition $\lambda=(\lambda_1,\ldots,\lambda_k)$, set $p_\lambda=p_{\lambda_1}\cdots p_{\lambda_k}$, with $p_\varnothing=1$. Over any $\QQ$-algebra $S$, the power sums $\{p_\lambda\}$ form an $S$-basis of $\Lambda_S$.

If $B=\{b_\lambda\}$ is any symmetric function basis and $f=\sum_\lambda a_\lambda b_\lambda$, then $[b_\mu]f=a_\mu$.  We also use coefficient extraction with respect to a chosen polynomial generator.  For example, $[h_a^j]_{\mathrm p}f$ denotes the coefficient of $h_a^j$ after viewing $f\in\Lambda_R=R[h_1,h_2,\ldots]$ as a polynomial in the single variable $h_a$, with coefficients in $R[h_i:i\ge1,\ i\ne a]$.  We write $[h_a]_{\mathrm p}f$ for $[h_a^1]_{\mathrm p}f$.  Similarly, over a field $\FF$ of characteristic zero, $[p_d^j]_{\mathrm p}f$ denotes the coefficient of $p_d^j$ after viewing $f\in\Lambda_\FF=\FF[p_1,p_2,\ldots]$ as a polynomial in $p_d$, with coefficients in $\FF[p_i:i\ge1,\ i\ne d]$.

For nonzero $f\in\LL$, define $\Top(f)$ to be the largest first part among the Schur functions occurring in $f$. In other words,
\[
\Top(f)=\max\{\nu_1\mid [s_\nu]f\ne0\}.
\]

\subsection{The Littlewood--Richardson rule}

For partitions $\lambda$ and $\mu$, define the \emph{Littlewood--Richardson coefficients} $c^\nu_{\lambda,\mu}$ by the Schur-basis expansion
\[
s_\lambda s_\mu=\sum_\nu c^\nu_{\lambda,\mu}s_\nu .
\]
Thus the $c^\nu_{\lambda,\mu}$ are the structure constants for multiplication in the Schur basis.

The \emph{Littlewood--Richardson rule} gives a tableau formula for these structure constants.  Suppose $\lambda\subseteq\nu$.  The \emph{skew shape} $\nu/\lambda$ is the set of boxes $(i,j)$ such that $1\le j\le\nu_i$ and $j>\lambda_i$. A filling of $\nu/\lambda$ is a map $T\colon \nu/\lambda\to\ZZ_{>0}$.  Its content is the sequence $\mu=(\mu_1,\mu_2,\ldots)$, where $\mu_i$ is the number of boxes assigned the value $i$.  The \emph{row word} of $T$ is obtained by reading entries from right to left along each row, starting with the top row and proceeding downward.

The coefficient $c^\nu_{\lambda,\mu}$ is the number of fillings $T$ of $\nu/\lambda$ satisfying
\begin{enumerate}[label=\textup{(\roman*)}]
  \item rows weakly increase from left to right and columns strictly increase from top to bottom;
  \item $T$ has content $\mu$;
  \item the row word of $T$ is a lattice word, meaning that every initial segment contains at least as many $i$'s as $(i+1)$'s, for every $i$.
\end{enumerate}
Such a filling is called a \emph{Littlewood--Richardson tableau}.  In particular, $c^\nu_{\lambda,\mu}=0$ unless $|\nu|=|\lambda|+|\mu|$ and $\lambda\subseteq\nu$.  References include \cite{LittlewoodRichardson1934}, \cite[Chapter~I, \S9]{Macdonald1995}, \cite[Chapter~5]{Fulton1997}, \cite[Chapter~7]{Stanley1999}, and \cite{Stembridge2002}.

\begin{lemma} \label{lem-row-removal}
Let $\alpha,\beta,\nu$ be partitions. \begin{enumerate}[label=\textup{(\roman*)}] \item If $c^\nu_{\alpha,\beta}\ne0$, then $\nu_1\le \alpha_1+\beta_1$. \item If $\nu_1=\alpha_1+\beta_1$, then $c^{(\alpha_1+\beta_1,\bar\nu)}_{\alpha,\beta}=c^{\bar\nu}_{\bar\alpha,\bar\beta}$. \end{enumerate}
\end{lemma}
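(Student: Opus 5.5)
Both parts come straight from the Littlewood--Richardson rule, by examining the first row of an LR tableau $T$ of shape $\nu/\alpha$ with content $\beta$.

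\emph{Part (i).} Start with the column-strictness condition together with the lattice-word condition. The row word begins with the first row of $\nu/\alpha$, read right to left. A lattice word must begin with $1$, and no entry $i\ge2$ can come before the first $i-1$. Since the first row is weakly increasing, its rightmost entry is its largest, and that entry is read first. Hence every entry of the first row equals $1$. The entries equal to $1$ in $T$ lie in distinct columns, by column-strictness. A box with entry $1$ in row $i\ge2$ must have a box of $\alpha$ directly above it, because a box of $T$ above it would need an entry smaller than $1$. It follows that the number of $1$'s is at least $\nu_1-\alpha_1$, since all first-row boxes of $\nu/\alpha$ are $1$'s. It also follows that $\beta_1\ge\nu_1-\alpha_1$, which gives $\nu_1\le\alpha_1+\beta_1$.

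\emph{Part (ii).} Now suppose $\nu_1=\alpha_1+\beta_1$. By the argument above, the first row of $\nu/\alpha$ has exactly $\beta_1$ boxes, and all of them contain $1$. So every $1$ of $T$ lies in the first row, and rows $2,3,\ldots$ contain only entries $\ge2$. Define the map sending $T$ to the tableau $T'$ of shape $\bar\nu/\bar\alpha$ obtained by deleting the first row and subtracting $1$ from every remaining entry. The content of $T'$ is $\bar\beta$.

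Next check that $T'$ is an LR tableau. Semistandardness is preserved because rows $2,3,\ldots$ themselves are unchanged apart from the shift by $1$. For the lattice condition, the row word of $T$ is $1^{\beta_1}$ followed by $w$, where $w$ is the word of the lower rows and contains no $1$'s. The prefix $1^{\beta_1}$ supplies all $\beta_1$ ones at the start, so the pair $(1,2)$ condition holds automatically: the number of $2$'s never exceeds $\beta_2\le\beta_1$. The conditions for pairs $(i,i+1)$ with $i\ge2$ are exactly the lattice conditions on $w$ shifted down by one. So $w$ shifted is lattice if and only if the full word of $T$ is.

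Conversely, take an LR tableau $T'$ of shape $\bar\nu/\bar\alpha$ with content $\bar\beta$. Add $1$ to every entry and place a first row of $\beta_1$ ones in columns $\alpha_1+1,\ldots,\nu_1$. The one point to check is column-strictness between rows $1$ and $2$. Row $2$ now has entries $\ge2>1$, so it holds wherever a row-$2$ box lies under a new first-row box. Row $2$ boxes lying under $\alpha$ in row $1$ impose no constraint. Finally, we need $\alpha\subseteq\nu$ and $\nu$ to be a partition. Both follow from the hypothesis $(\alpha_1+\beta_1,\bar\nu)$ being the partition in question: we need $\nu_2\le\nu_1$, and if $c^{\bar\nu}_{\bar\alpha,\bar\beta}\neq0$ then $\nu_2\le\alpha_2+\beta_2\le\alpha_1+\beta_1$ by part (i) applied to $\bar\nu$.

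The two constructions are mutually inverse, which gives the equality of coefficients. The main point requiring care is the observation that all $1$'s of $T$ are forced into the first row. That is exactly where the equality $\nu_1=\alpha_1+\beta_1$ is used, and it is what makes the lattice conditions separate cleanly.
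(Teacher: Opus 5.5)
Your proposal is correct and follows essentially the same route as the paper: the lattice condition forces every entry in the first row of $\nu/\alpha$ to be $1$, which gives (i). In the equality case $\nu_1=\alpha_1+\beta_1$, deleting that row of $1$'s and subtracting $1$ from the remaining entries is a bijection onto Littlewood--Richardson tableaux of shape $\bar\nu/\bar\alpha$ and content $\bar\beta$. The column-strictness remarks in your part (i) are not needed, and the paper also notes that (ii) is the $r=1$ case of the Cho--Moon row-removal formula.
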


\begin{proof}
For part~\textup{(i)}, assume $c^\nu_{\alpha,\beta}\ne0$.  By the tableau form of the Littlewood--Richardson rule, there exists a Littlewood--Richardson tableau $T$ of shape $\nu/\alpha$ and content $\beta$.  If the first row of $\nu/\alpha$ is empty, there is nothing to prove.  Otherwise, its rightmost entry is the first letter of the row word, so the lattice condition forces this entry to be $1$.  Since rows weakly increase from left to right, every entry in the first row is equal to $1$. Thus the first row of $\nu/\alpha$ has at most $\beta_1$ boxes, so $\nu_1-\alpha_1\le\beta_1$, and hence $\nu_1\le\alpha_1+\beta_1$.

For part~\textup{(ii)}, this is exactly the $r=1$, $i=j=k=1$ specialization of the row-removal formula of Cho--Moon; see \cite[Theorem~3.1, case $r=1$\textup{(b)}]{CM11}.  We give the direct tableau proof in this special case for completeness.

Assume first that $\beta\ne\varnothing$.  Since $\nu_1=\alpha_1+\beta_1$, the first row of $\nu/\alpha$ has exactly $\beta_1$ boxes.  Let $T$ be a Littlewood--Richardson tableau of shape $\nu/\alpha$ and content $\beta$.  The rightmost entry in the first row is the first letter of the row word, hence is equal to $1$ by the lattice condition.  Since rows weakly increase from left to right, every entry in the first row is equal to $1$.  Deleting this first row and subtracting $1$ from every remaining entry gives a Littlewood--Richardson tableau of shape $\bar\nu/\bar\alpha$ and content $\bar\beta$.

Conversely, start with a Littlewood--Richardson tableau of shape $\bar\nu/\bar\alpha$ and content $\bar\beta$.  Add $1$ to every entry and insert a first row of $\beta_1$ entries equal to $1$ in columns $\alpha_1+1,\ldots,\nu_1$.  The resulting filling has shape $\nu/\alpha$ and content $\beta$.  It is semistandard, and its row word is lattice.  For letters $i\ge2$, this follows from the lattice condition before shifting, while for the pair $1,2$ it follows from $\beta_1\ge\beta_2$.

These two constructions are inverse and hence give a bijection between the Littlewood--Richardson tableaux counted by the two coefficients.  The case $\beta=\varnothing$ is immediate.  Therefore $c^{(\alpha_1+\beta_1,\bar\nu)}_{\alpha,\beta} = c^{\bar\nu}_{\bar\alpha,\bar\beta}$.
\end{proof}

\begin{corollary}\label{cor-top-product}
For partitions $\alpha$ and $\beta$, $\Top(s_\alpha s_\beta)=\alpha_1+\beta_1$.
\end{corollary}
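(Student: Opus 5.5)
The plan is to derive both inequalities from Lemma~\ref{lem-row-removal}. By definition, $\Top(s_\alpha s_\beta)$ is the largest $\nu_1$ with $c^\nu_{\alpha,\beta}\ne0$. So we must show that every such $\nu$ has $\nu_1\le\alpha_1+\beta_1$, and that at least one has equality. The first point is exactly part~(i) of Lemma~\ref{lem-row-removal}. One caveat: here the product is taken in $\Lambda_R$. Since the Schur functions form an $R$-basis and $s_\alpha s_\beta=\sum_\nu c^\nu_{\alpha,\beta}s_\nu$ with integer coefficients mapped into $R$, the coefficient $[s_\nu](s_\alpha s_\beta)$ is the image of $c^\nu_{\alpha,\beta}$ in $R$. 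Hence a nonzero coefficient in $R$ forces $c^\nu_{\alpha,\beta}\ne0$ in $\ZZ$, and the upper bound follows.

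For the lower bound, I will exhibit a $\nu$ with $\nu_1=\alpha_1+\beta_1$ whose coefficient equals $1$. A coefficient of $1$ is nonzero in any nonzero ring $R$, which we are assuming. The natural choice is $\nu=\alpha+\beta$ (componentwise sum), which is a partition. By part~(ii) of Lemma~\ref{lem-row-removal},
\[
c^{\alpha+\beta}_{\alpha,\beta}=c^{\bar\alpha+\bar\beta}_{\bar\alpha,\bar\beta},
\]
since $\overline{\alpha+\beta}=\bar\alpha+\bar\beta$. Iterating this row removal, and inducting on $\max(\ell(\alpha),\ell(\beta))$, reduces the coefficient to $c^{\varnothing}_{\varnothing,\varnothing}=1$. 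Each step of the induction is legitimate because the first part of $\bar\alpha+\bar\beta$ is $\alpha_2+\beta_2$, so the hypothesis of part~(ii) holds again.

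Alternatively, one could cite the standard fact $c^{\alpha+\beta}_{\alpha,\beta}=1$, but the induction above keeps the argument self-contained. Hence $[s_{\alpha+\beta}](s_\alpha s_\beta)=1\ne0$ in $R$, which gives $\Top(s_\alpha s_\beta)\ge\alpha_1+\beta_1$.

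There is no real obstacle here. The only care needed is over a general ring $R$: the upper bound must come from integrality of the coefficients, and the lower bound needs a coefficient that is exactly $1$, not merely nonzero. The degenerate cases in which $\alpha$ or $\beta$ is empty are covered by the same induction.
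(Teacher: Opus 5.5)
Your proposal is correct and follows the paper's proof. The upper bound comes from Lemma~\ref{lem-row-removal}\textup{(i)}, and the lower bound comes from $c^{\alpha+\beta}_{\alpha,\beta}=1$ via Lemma~\ref{lem-row-removal}\textup{(ii)}. The one difference is that the paper applies part~\textup{(ii)} once and proves $c^{\bar\alpha+\bar\beta}_{\bar\alpha,\bar\beta}=1$ directly from uniqueness of the row-constant Littlewood--Richardson tableau, whereas you iterate part~\textup{(ii)} down to $c^{\varnothing}_{\varnothing,\varnothing}=1$; this is equally valid, and your remark on nonzero coefficients in $R$ versus $\ZZ$ makes explicit a point the paper leaves implicit.
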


\begin{proof}
By Lemma~\ref{lem-row-removal}\textup{(i)}, every Schur function $s_\nu$ occurring in $s_\alpha s_\beta$ has $\nu_1\le\alpha_1+\beta_1$, so $\Top(s_\alpha s_\beta)\le\alpha_1+\beta_1$.

For the reverse inequality, note that $c^{\bar\alpha+\bar\beta}_{\bar\alpha,\bar\beta}=1$.  Indeed, the tableau of shape $(\bar\alpha+\bar\beta)/\bar\alpha$ obtained by filling every box added in row $i$ with the entry $i$ is a Littlewood--Richardson tableau of content $\bar\beta$.  It is the only one, since the lattice condition forces the first row to contain only entries equal to $1$, and since that row has $\bar\beta_1$ boxes, it contains all entries equal to $1$.  Proceeding row by row, once the entries $1,\ldots,i-1$ have been accounted for, the lattice condition forces the rightmost entry in row $i$ to be $i$, and row semistandardness then forces the whole row to be filled with $i$.  Since $(\bar\alpha+\bar\beta)_1\le\alpha_1+\beta_1$, the sequence $(\alpha_1+\beta_1,\bar\alpha+\bar\beta)$ is a partition.  Hence Lemma~\ref{lem-row-removal}\textup{(ii)} gives
\[
c^{(\alpha_1+\beta_1,\bar\alpha+\bar\beta)}_{\alpha,\beta}
=
c^{\bar\alpha+\bar\beta}_{\bar\alpha,\bar\beta}
=
1.
\]
Thus a Schur function with first part $\alpha_1+\beta_1$ occurs in $s_\alpha s_\beta$, so $\Top(s_\alpha s_\beta)\ge\alpha_1+\beta_1$. \end{proof}

\subsection{A Jacobi--Trudi derivative}

We use the following formal derivative on the polynomial presentation $\Lambda_R=R[h_1,h_2,\ldots]$.  For $n>0$, let $\partial/\partial h_n$ be the unique $R$-derivation satisfying
\[
\frac{\partial h_i}{\partial h_n}
=
\begin{cases}
1, & i=n,\\
0, & i\ne n.
\end{cases}
\]
Since $\partial/\partial h_n$ is an $R$-derivation, we shall use the usual linearity and Leibniz rules without further comment.

\begin{definition}
Let $\tau$ be a nonempty partition.  Let
\[
M_\tau=\bigl(h_{\tau_i-i+j}\bigr)_{1\le i,j\le \ell(\tau)}.
\]
Thus \eqref{eq-jacobi-trudi}, with $r=\ell(\tau)$, gives $s_\tau=\det M_\tau$.  For $1\le p,q\le \ell(\tau)$, let $M_\tau^{(p,q)}$ denote the matrix obtained from $M_\tau$ by deleting row $p$ and column $q$; when $\ell(\tau)=1$, deleting the only row and column leaves the $0\times0$ matrix, whose determinant is $1$.

Let $\partial\tau$ be the partition obtained from $(\tau_2-1,\tau_3-1,\ldots,\tau_{\ell(\tau)}-1)$ by deleting zero parts. \end{definition}

\begin{lemma} \label{lem-jt-derivative}
Let $\tau$ be a nonempty partition.
\begin{enumerate}[label=\textup{(\roman*)}]
\item Viewed as a polynomial in $h_{\tau_1+\ell(\tau)-1}$, the Schur function $s_\tau$ has degree one. \item One has $[h_{\tau_1+\ell(\tau)-1}]_{\mathrm{p}} s_\tau = (-1)^{\ell(\tau)-1}s_{\partial\tau}$. \item For every integer $k\ge \tau_1+\ell(\tau)-1$,
\[
\frac{\partial s_\tau}{\partial h_k}
=
\begin{cases}
(-1)^{\ell(\tau)-1}s_{\partial\tau},
& k=\tau_1+\ell(\tau)-1,\\
0,
& k>\tau_1+\ell(\tau)-1.
\end{cases}
\]
\end{enumerate}
\end{lemma}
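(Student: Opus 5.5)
The proof works directly with the Jacobi--Trudi determinant $s_\tau=\det M_\tau$, where $n=\ell(\tau)$ and $N=\tau_1+n-1$. The entry of $M_\tau$ in position $(i,j)$ is $h_{\tau_i-i+j}$. Its index $\tau_i-i+j$ is strictly decreasing in $i$ and strictly increasing in $j$. The maximum index is therefore attained only at position $(1,n)$, where it equals $\tau_1-1+n=N$. Every other entry is $h_m$ with $m<N$ (or is $0$ or $1$). So $h_N$ appears exactly once in the matrix, and no $h_k$ with $k>N$ appears at all.

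Parts (i) and (ii) then follow from a cofactor expansion along the first row. Since every entry other than the $(1,n)$ entry lies in $R[h_1,\ldots,h_{N-1}]$ (together with the constants $h_0=1$ and $h_r=0$ for $r<0$), the expansion gives
\[
s_\tau=(-1)^{1+n}h_N\det M_\tau^{(1,n)}+(\text{terms free of }h_N).
\]
The minor $\det M_\tau^{(1,n)}$ is also free of $h_N$. Hence $s_\tau$ has degree at most one in $h_N$, and
\[
[h_N]_{\mathrm p}s_\tau=(-1)^{n-1}\det M_\tau^{(1,n)}.
\]
For degree exactly one, we will show that this coefficient is nonzero after identifying it with a Schur function; Schur functions are basis elements and hence nonzero in $\LL$ for $R\ne0$. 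When $n=1$ the minor is the empty determinant $1$, which equals $s_\varnothing=s_{\partial\tau}$, so the formula holds trivially in that case.

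The main step is to identify $\det M_\tau^{(1,n)}$ with $s_{\partial\tau}$. Deleting row $1$ and column $n$ leaves the $(n-1)\times(n-1)$ matrix with entries indexed by $i'=i-1$ and $j'=j$, where $1\le i',j'\le n-1$. Its $(i',j')$ entry is
\[
h_{\tau_{i'+1}-(i'+1)+j'}=h_{(\tau_{i'+1}-1)-i'+j'}.
\]
This is exactly the Jacobi--Trudi matrix, with $r=n-1$, for the sequence $\mu=(\tau_2-1,\ldots,\tau_n-1)$. The sequence $\mu$ is weakly decreasing with nonnegative entries, and after deleting its zero parts it becomes $\partial\tau$. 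Since $\ell(\partial\tau)\le n-1$, the identity \eqref{eq-jacobi-trudi} with $r=n-1$ applies: padding with zeros does not change $\det(h_{\mu_i-i+j})$. This gives $\det M_\tau^{(1,n)}=s_{\partial\tau}$, which completes (ii) and therefore (i).

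For part (iii), since $s_\tau$ is linear in $h_N$ with coefficient free of $h_N$, applying the derivation $\partial/\partial h_N$ returns exactly that coefficient, $(-1)^{n-1}s_{\partial\tau}$. For $k>N$, the variable $h_k$ does not occur in any entry of $M_\tau$. The determinant therefore lies in $R[h_1,\ldots,h_N]$, so its derivative with respect to $h_k$ is $0$.

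The only delicate point is the bookkeeping of indices in the minor and the harmless use of padding by zeros in Jacobi--Trudi. I expect no real obstacle beyond that.
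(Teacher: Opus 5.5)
Your proposal is correct and follows essentially the same route as the paper. Both arguments observe that $h_{\tau_1+\ell(\tau)-1}$ occurs only in the $(1,\ell(\tau))$ entry and that no higher $h_k$ occurs, use a cofactor expansion to isolate that entry, and identify the complementary minor with the (zero-padded) Jacobi--Trudi matrix of $\partial\tau$. The only difference is cosmetic: you expand along the first row, whereas the paper expands along the last column.
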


\begin{proof}
For the $(i,j)$-entry of $M_\tau$, one has $\tau_i-i+j\le \tau_1-i+\ell(\tau)\le \tau_1+\ell(\tau)-1$.  Equality holds only when $i=1$ and $j=\ell(\tau)$.  Hence no entry of $M_\tau$ involves $h_k$ for $k>\tau_1+\ell(\tau)-1$, so $\partial s_\tau/\partial h_k=0$ for such $k$.

It remains to isolate the coefficient of $h_{\tau_1+\ell(\tau)-1}$.  Taking the cofactor expansion of $\det M_\tau$ along the last column gives
\begin{equation}\label{eq-jt-last-column-expansion}
s_\tau
=
(-1)^{1+\ell(\tau)}
h_{\tau_1+\ell(\tau)-1}\det M_\tau^{(1,\ell(\tau))}
+
\sum_{i=2}^{\ell(\tau)}
(-1)^{i+\ell(\tau)}
h_{\tau_i-i+\ell(\tau)}
\det M_\tau^{(i,\ell(\tau))}.
\end{equation}
The only entry of $M_\tau$ involving $h_{\tau_1+\ell(\tau)-1}$ is the $(1,\ell(\tau))$-entry.  Since every cofactor $M_\tau^{(i,\ell(\tau))}$ appearing in \eqref{eq-jt-last-column-expansion} is obtained by deleting the last column, none of these determinants involves $h_{\tau_1+\ell(\tau)-1}$.  For $i\ge2$, we also have $\tau_i-i+\ell(\tau)\le\tau_1+\ell(\tau)-2$, so the displayed factor $h_{\tau_i-i+\ell(\tau)}$ in the summation term is not $h_{\tau_1+\ell(\tau)-1}$. Hence, in \eqref{eq-jt-last-column-expansion}, the only occurrence of $h_{\tau_1+\ell(\tau)-1}$ is the displayed linear factor in the first summand.  Therefore $s_\tau$ has degree at most one in $h_{\tau_1+\ell(\tau)-1}$, and
\begin{equation}\label{eq-jt-h-coefficient-cofactor}
[h_{\tau_1+\ell(\tau)-1}]_{\mathrm{p}} s_\tau
=
(-1)^{1+\ell(\tau)}\det M_\tau^{(1,\ell(\tau))}.
\end{equation}

If $\ell(\tau)>1$, then after deleting row $1$ and column $\ell(\tau)$ the remaining matrix is
\[
\bigl(h_{(\tau_{i+1}-1)-i+j}\bigr)_{1\le i,j\le \ell(\tau)-1}.
\]
This is the Jacobi--Trudi matrix for $\partial\tau$, padded with trailing zeros to length $\ell(\tau)-1$, so its determinant is $s_{\partial\tau}$. If $\ell(\tau)=1$, then $\partial\tau=\varnothing$ and $M_\tau^{(1,1)}$ is the $0\times0$ matrix, whose determinant is $1=s_\varnothing$.  Thus, in all cases,
\begin{equation}\label{eq-jt-leading-cofactor}
\det M_\tau^{(1,\ell(\tau))}=s_{\partial\tau}.
\end{equation}
Substituting \eqref{eq-jt-leading-cofactor} into \eqref{eq-jt-h-coefficient-cofactor} proves \textup{(ii)}.  Since $s_{\partial\tau}$ is a Schur-basis element, this coefficient is nonzero; hence $s_\tau$ has degree one in $h_{\tau_1+\ell(\tau)-1}$, proving \textup{(i)}.

For $k=\tau_1+\ell(\tau)-1$, parts \textup{(i)} and \textup{(ii)} give
\[
\frac{\partial s_\tau}{\partial h_{\tau_1+\ell(\tau)-1}}
=
[h_{\tau_1+\ell(\tau)-1}]_{\mathrm{p}} s_\tau
=
(-1)^{\ell(\tau)-1}s_{\partial\tau}.
\]
The case $k>\tau_1+\ell(\tau)-1$ was proved in the first paragraph, so \textup{(iii)} follows.
\end{proof}

\section{The \texorpdfstring{$\theta$}{theta}-splitting theorem} \label{sec-core}

This section proves Theorem~\ref{thm-core}.  The key intermediate result is an independence statement for the derivative terms that arise from a hypothetical relation among the products $s_\alpha s_\beta$ indexed by $\theta$-splittings.

\begin{definition}\label{def-first-row-projection}
For an integer $n$, define $\pi_n$ on the Schur basis by
\[
\pi_n(s_\nu)=
\begin{cases}
s_{\bar\nu}, & \nu_1=n,\\
0, & \nu_1\ne n,
\end{cases}
\]
and extend $R$-linearly to $\Lambda_R$. \end{definition}

\begin{lemma} \label{lem-top-row-projection}
For possibly empty partitions $\alpha$ and $\beta$,
\[
\pi_{\Top(s_\alpha s_\beta)}(s_\alpha s_\beta)
=
s_{\bar\alpha}s_{\bar\beta}.
\]
\end{lemma}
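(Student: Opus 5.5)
By Corollary~\ref{cor-top-product}, $\Top(s_\alpha s_\beta)=\alpha_1+\beta_1$, so the task is to compute $\pi_{\alpha_1+\beta_1}(s_\alpha s_\beta)$. The plan is to expand the product in the Schur basis,
\[
s_\alpha s_\beta=\sum_\nu c^\nu_{\alpha,\beta}s_\nu,
\]
and apply $\pi_n$ with $n=\alpha_1+\beta_1$ term by term. By Definition~\ref{def-first-row-projection}, only the terms with $\nu_1=n$ survive, and each such $s_\nu$ is sent to $s_{\bar\nu}$.

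Every surviving $\nu$ has the form $\nu=(n,\bar\nu)$, where $\bar\nu$ is a partition with $\bar\nu_1\le n$. Conversely, any partition $\mu$ with $\mu_1\le n$ gives a partition $(n,\mu)$. So the surviving terms are parametrized by such $\mu$. By Lemma~\ref{lem-row-removal}\textup{(ii)}, their coefficients are
\[
c^{(n,\mu)}_{\alpha,\beta}=c^{\mu}_{\bar\alpha,\bar\beta}.
\]
This gives
\[
\pi_n(s_\alpha s_\beta)=\sum_{\mu:\,\mu_1\le n} c^{\mu}_{\bar\alpha,\bar\beta}\,s_\mu .
\]

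To identify this sum with $s_{\bar\alpha}s_{\bar\beta}$, we need the constraint $\mu_1\le n$ to exclude nothing. Suppose $c^\mu_{\bar\alpha,\bar\beta}\ne0$. Lemma~\ref{lem-row-removal}\textup{(i)} gives $\mu_1\le\bar\alpha_1+\bar\beta_1=\alpha_2+\beta_2\le\alpha_1+\beta_1=n$. Hence the sum runs over all $\mu$ with nonzero coefficient, and it equals $s_{\bar\alpha}s_{\bar\beta}$.

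The degenerate cases need a separate check. If $\alpha=\beta=\varnothing$, then $n=0$, and $\pi_0(s_\varnothing)=s_\varnothing=s_{\bar\varnothing}s_{\bar\varnothing}$, which is the claim. If only one of the two is empty, say $\beta=\varnothing$, then $\pi_{\alpha_1}(s_\alpha)=s_{\bar\alpha}$ directly. These cases are also covered by the convention $\bar\varnothing=\varnothing$ together with the remark in the proof of Lemma~\ref{lem-row-removal} that the case $\beta=\varnothing$ is immediate.

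The only real subtlety is the bookkeeping between the two directions: we must confirm both that every $\nu$ with $\nu_1=n$ arises as $(n,\mu)$, and that no $\mu$ contributing to $s_{\bar\alpha}s_{\bar\beta}$ is lost. The second point is exactly what Lemma~\ref{lem-row-removal}\textup{(i)} supplies.
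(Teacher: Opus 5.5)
Your proof is correct and follows the same route as the paper: handle the empty cases directly, use Corollary~\ref{cor-top-product} to identify the projection index as $\alpha_1+\beta_1$, expand in the Schur basis, and apply Lemma~\ref{lem-row-removal}\textup{(ii)} termwise. Your extra check, via Lemma~\ref{lem-row-removal}\textup{(i)}, that the constraint $\mu_1\le\alpha_1+\beta_1$ discards no term of $s_{\bar\alpha}s_{\bar\beta}$ is left implicit in the paper's displayed chain of equalities, so it adds care to the same argument rather than giving a different approach.
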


\begin{proof} If one of $\alpha$ and $\beta$ is empty, the claim follows directly from the definition of $\pi_n$.  Thus assume that both are nonempty. Expanding gives
\[
\pi_{\Top(s_\alpha s_\beta)}(s_\alpha s_\beta)
=
\pi_{\Top(s_\alpha s_\beta)}
\!\left(\sum_\nu c^\nu_{\alpha,\beta}s_\nu\right)
=
\sum_\gamma c^{(\alpha_1+\beta_1,\gamma)}_{\alpha,\beta}s_\gamma
=
\sum_\gamma c^\gamma_{\bar\alpha,\bar\beta}s_\gamma
=
s_{\bar\alpha}s_{\bar\beta},
\]
where the second equality follows from $R$-linearity and the definition of $\pi_{\Top(s_\alpha s_\beta)}$, using Corollary~\ref{cor-top-product} to identify $\Top(s_\alpha s_\beta)=\alpha_1+\beta_1$, and the third equality is Lemma~\ref{lem-row-removal}\textup{(ii)}.
\end{proof}

\subsection{A derivative-reduced splitting family}

The Jacobi--Trudi derivative breaks the symmetry between the two factors of a splitting; accordingly, the independence statement below is formulated for ordered decompositions of $\theta$.

\begin{lemma}\label{lem-oriented-block}
Fix a partition $\theta$ and an integer $n$.  The functions
\[
s_\alpha s_{\partial\beta},
\]
indexed by ordered pairs $(\alpha,\beta)$ of partitions such that $\alpha+\beta=\theta$ componentwise, $\beta\ne\varnothing$, and $\beta_1+\ell(\beta)-1=n$, are linearly independent over $R$. \end{lemma}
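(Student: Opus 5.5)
The plan is to prove a stronger statement by induction on $\ell(\theta)$, using the first-row projections $\pi_n$ of Definition~\ref{def-first-row-projection} together with Lemma~\ref{lem-top-row-projection}. First re-index the family by $\alpha$ alone. Since $\alpha+\beta=\theta$, the partition $\beta=\theta-\alpha$ is determined by $\alpha$, so distinct ordered pairs have distinct $\alpha$. Moreover, for $i\ge1$ the parts of $\partial\beta$ are
\[
(\partial\beta)_i=\max(\beta_{i+1}-1,0)=\max(\theta_{i+1}-\alpha_{i+1}-1,0).
\]
Write $\gamma_\theta(\alpha)$ for this partition. The claim I will prove is the following: for any fixed $\theta$ and any set $A$ of partitions $\alpha$ for which $\theta-\alpha$ is a partition, the functions $s_\alpha s_{\gamma_\theta(\alpha)}$, $\alpha\in A$, are linearly independent over $R$. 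The lemma is the special case in which $A$ is cut out by the conditions $\beta\ne\varnothing$ and $\beta_1+\ell(\beta)-1=n$; those conditions play no role in the argument.

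The base case is $\theta=\varnothing$. Then $A\subseteq\{\varnothing\}$ and the only product is $1$, which is independent. For the inductive step, suppose $\sum_{\alpha\in A}c_\alpha s_\alpha s_{\gamma_\theta(\alpha)}=0$ with some $c_\alpha\ne0$. By Corollary~\ref{cor-top-product},
\[
\Top\bigl(s_\alpha s_{\gamma_\theta(\alpha)}\bigr)=\alpha_1+\max(\theta_2-\alpha_2-1,0).
\]
Let $T$ be the maximum of this quantity over the $\alpha$ with $c_\alpha\ne0$, and apply $\pi_T$ to the relation. Every product whose $\Top$ is smaller than $T$ has no Schur term with first part $T$, so it is killed. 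By Lemma~\ref{lem-top-row-projection}, each product with $\Top$ equal to $T$ goes to $s_{\bar\alpha}s_{\overline{\gamma_\theta(\alpha)}}$. One checks directly that $\overline{\gamma_\theta(\alpha)}=\gamma_{\bar\theta}(\bar\alpha)$ and that $\bar\theta-\bar\alpha=\bar\beta$ is a partition. So the projected relation is a relation in the family for $\bar\theta$, which has smaller length.

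The step I expect to need care is injectivity: the map $\alpha\mapsto\bar\alpha$ must be injective on the set of $\alpha$ with $c_\alpha\ne0$ and $\Top=T$. Otherwise coefficients could merge and cancel, and the induction hypothesis would give nothing. I would argue as follows. Suppose $\bar\alpha=\bar\alpha'$; then $\alpha_2=\alpha'_2$, so $\beta_2=\beta'_2$.
\begin{itemize}
\item If $\beta_2=0$, then $\Top=\alpha_1$, which forces $\alpha_1=T=\alpha'_1$.
\item If $\beta_2\ge1$, then $\alpha_1=T-\theta_2+\alpha_2+1$, which is the same for $\alpha'$.
\end{itemize}
In either case $\alpha=\alpha'$, so the projected relation has the same nonzero coefficients, now indexed by distinct $\bar\alpha$.

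That contradicts the induction hypothesis for $\bar\theta$. Hence all $c_\alpha=0$, and the lemma follows. I also need to check the degenerate cases $\ell(\beta)\le1$, where $\partial\beta$ is empty. These are covered by the $\max(\cdot,0)$ convention in the definition of $\gamma_\theta(\alpha)$ and by the empty-factor case of Lemma~\ref{lem-top-row-projection}.
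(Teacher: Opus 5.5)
Your proof is correct and is essentially the paper's argument: induction on $\ell(\theta)$, applying $\pi_T$ at the maximal top first row, stripping first rows via Lemma~\ref{lem-top-row-projection}, and checking that $\alpha\mapsto\bar\alpha$ is injective on the surviving terms. The one difference is that you drop the fixed-$n$ constraint and recover $\alpha_1$ from $T$, whereas the paper recovers $\beta_1=n-\ell(\bar\beta)$ from $n$; as a result your $\ell(\beta)\le 1$ terms need no separate treatment.
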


\begin{proof}
Let $r=\ell(\theta)$.  We prove the lemma by induction on $r$.

If $r=1$, then any indexed pair has $\beta=(n)$, so there is at most one term.  If such a term occurs, it is $s_\alpha s_\varnothing=s_\alpha$. Since $s_\alpha$ is an element of the Schur $R$-basis of $\Lambda_R$, any relation $c\,s_\alpha=0$ has $c=0$.  Thus the family is linearly independent.

Assume $r>1$, and suppose that the statement is known for partitions of length $r-1$.  Consider a relation
\begin{equation}\label{eq-oriented-relation}
\sum_{\substack{(\alpha,\beta):\, \alpha+\beta=\theta\\
\beta\ne\varnothing,\ \beta_1+\ell(\beta)-1=n}}
c_{\alpha,\beta}\,s_\alpha s_{\partial\beta}=0.
\end{equation}

Assume, for contradiction, that some coefficient with $\ell(\beta)\ge2$ is nonzero.  Among such nonzero coefficients, choose a minimal value
\[
t=\min\{\alpha_2+r-\ell(\beta)\mid c_{\alpha,\beta}\ne0,\ \ell(\beta)\ge2\}.
\]
For any indexed pair with $\ell(\beta)\ge2$, Corollary~\ref{cor-top-product} and the conditions $\alpha+\beta=\theta$ and $\beta_1+\ell(\beta)-1=n$ give
\begin{equation}\label{eq-oriented-top}
\Top(s_\alpha s_{\partial\beta})
=
\alpha_1+\beta_2-1
=
\theta_1+\theta_2-n+r-2-(\alpha_2+r-\ell(\beta)).
\end{equation}
Set $N=\theta_1+\theta_2-n+r-2-t$.  By \eqref{eq-oriented-top}, every term with $\alpha_2+r-\ell(\beta)>t$ has top first row strictly smaller than $N$, and every term with $\alpha_2+r-\ell(\beta)<t$ has zero coefficient by the choice of $t$.

By $R$-linearity, applying $\pi_N$ to \eqref{eq-oriented-relation} gives another relation in $\Lambda_R$.  We first determine which summands have nonzero image. Terms with $\ell(\beta)\ge2$ and $\alpha_2+r-\ell(\beta)>t$ have top first row strictly smaller than $N$ by \eqref{eq-oriented-top}, and hence have zero image under $\pi_N$. Terms with $\ell(\beta)\ge2$ and $\alpha_2+r-\ell(\beta)<t$ have zero coefficient by the choice of $t$.

The possible term with $\ell(\beta)=1$ also has zero image under $\pi_N$. Indeed, such a term has $\beta=(n)$, hence $\alpha_1=\theta_1-n$, and its summand is $c_{\alpha,\beta}s_\alpha$.  On the other hand, for any indexed pair with $\ell(\beta)\ge2$ and $\alpha_2+r-\ell(\beta)=t$, we have $N-(\theta_1-n)=\beta_2+\ell(\beta)-2>0$. Thus the possible length-one summand has Schur first part $\theta_1-n<N$, so its image under $\pi_N$ is zero.

Therefore only the terms with $\ell(\beta)\ge2$ and $\alpha_2+r-\ell(\beta)=t$ can have nonzero image under $\pi_N$.  Fix such a pair.  Since \eqref{eq-oriented-top} gives $N=\Top(s_\alpha s_{\partial\beta})$, Lemma~\ref{lem-top-row-projection} gives
\[
\pi_N(s_\alpha s_{\partial\beta})
=
s_{\bar\alpha}s_{\overline{\partial\beta}}
=
s_{\bar\alpha}s_{\partial\bar\beta},
\]
where the final equality follows directly from the definition of $\partial$. Thus applying $\pi_N$ to \eqref{eq-oriented-relation} gives
\begin{equation}\label{eq-tail-relation}
\sum_{\substack{(\alpha,\beta):\, \alpha+\beta=\theta\\
\beta_1+\ell(\beta)-1=n\\
\ell(\beta)\ge2,\ \alpha_2+r-\ell(\beta)=t}}
c_{\alpha,\beta}\,s_{\bar\alpha}s_{\partial\bar\beta}=0.
\end{equation}

Each pair in \eqref{eq-tail-relation} satisfies $\bar\alpha+\bar\beta=\bar\theta$ and $\bar\beta\ne\varnothing$.  The condition $\alpha_2+r-\ell(\beta)=t$ is equivalent to
\[
\bar\beta_1+\ell(\bar\beta)-1
=
\beta_2+\ell(\beta)-2
=
\theta_2+r-2-t.
\]
Thus all terms in \eqref{eq-tail-relation} belong to the family appearing in Lemma~\ref{lem-oriented-block} for the partition $\bar\theta$ and the integer $\theta_2+r-2-t$.

By the induction hypothesis applied to $\bar\theta$, the coefficient of each indexed product in \eqref{eq-tail-relation} is zero, after combining terms with the same ordered pair $(\bar\alpha,\bar\beta)$.  We now check that this combining does not merge coefficients from distinct pairs in the original relation.  The map $(\alpha,\beta)\mapsto(\bar\alpha,\bar\beta)$ is injective on the pairs appearing in \eqref{eq-tail-relation}.  Indeed, from $\bar\beta$ and the fixed integer $n$ one recovers $\beta_1=n-\ell(\bar\beta)$, hence $\beta$, and then $\alpha$ is determined by $\alpha+\beta=\theta$.  Therefore each coefficient seen by the induction hypothesis is a single original coefficient $c_{\alpha,\beta}$, and all coefficients appearing in \eqref{eq-tail-relation} are zero.

Hence all coefficients with $\ell(\beta)\ge2$ and $\alpha_2+r-\ell(\beta)=t$ are zero, contradicting the choice of $t$.  Thus every coefficient with $\ell(\beta)\ge2$ is zero.

It remains to consider terms with $\ell(\beta)=1$.  The conditions force $\beta=(n)$, and then $\alpha$ is determined by $\alpha+\beta=\theta$. Thus there is at most one such ordered pair.  If it occurs, then $\partial\beta=\varnothing$, so the remaining summand is $c_{\alpha,\beta}s_\alpha$.  Since $s_\alpha$ is a Schur basis element, $c_{\alpha,\beta}s_\alpha=0$ implies $c_{\alpha,\beta}=0$.
\end{proof}

\begin{lemma}\label{lem-maximal-self-pair}
Let $\tau$ be a nonempty partition, and set $n=\tau_1+\ell(\tau)-1$.  Suppose
\begin{equation}\label{eq-maximal-self-pair-relation}
c\,s_\tau^2+
\sum_{i=1}^u d_i\,s_{\alpha^{(i)}}s_{\beta^{(i)}}=0
\end{equation}
in $\Lambda_R$, where $u\ge0$, $c,d_i\in R$, and, for each $i$, either $\alpha^{(i)}=\varnothing$ or $\alpha^{(i)}_1+\ell(\alpha^{(i)})-1<n$, and either $\beta^{(i)}=\varnothing$ or $\beta^{(i)}_1+\ell(\beta^{(i)})-1<n$.  Then $c=0$.
\end{lemma}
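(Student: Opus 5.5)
The plan is to view the relation \eqref{eq-maximal-self-pair-relation} as a polynomial identity in the single generator $h_n$, where $n=\tau_1+\ell(\tau)-1$, and extract the coefficient of $h_n^2$. I will show that the only summand contributing to $h_n^2$ is $c\,s_\tau^2$, so that this coefficient is $c\,s_{\partial\tau}^2$. Then I will use the Schur-basis expansion of $s_{\partial\tau}^2$ to conclude that $c=0$ over an arbitrary commutative ring $R$.

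\emph{The product terms do not involve $h_n$.} Let $\gamma$ be a nonempty partition with $\gamma_1+\ell(\gamma)-1<n$. As in the first paragraph of the proof of Lemma~\ref{lem-jt-derivative}, every entry of the Jacobi--Trudi matrix $M_\gamma$ is $h_k$ with $k\le\gamma_1+\ell(\gamma)-1<n$. Hence $s_\gamma=\det M_\gamma$ lies in $R[h_1,\ldots,h_{n-1}]$, and the same holds trivially for $s_\varnothing=1$. Therefore each product $s_{\alpha^{(i)}}s_{\beta^{(i)}}$ has degree $0$ in $h_n$.

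\emph{The $h_n^2$-coefficient of $s_\tau^2$.} By Lemma~\ref{lem-jt-derivative}(i),(ii) and the cofactor expansion \eqref{eq-jt-last-column-expansion}, we can write
\[
s_\tau=\varepsilon\,s_{\partial\tau}\,h_n+A,\qquad \varepsilon=(-1)^{\ell(\tau)-1},
\]
where $A$ and $s_{\partial\tau}$ lie in $R[h_i:i\ne n]$. This holds because the cofactor $M_\tau^{(1,\ell(\tau))}$ and the remaining summands contain no entry $h_n$. Squaring gives
\[
s_\tau^2=s_{\partial\tau}^2h_n^2+2\varepsilon s_{\partial\tau}A\,h_n+A^2,
\]
so $[h_n^2]_{\mathrm p}s_\tau^2=s_{\partial\tau}^2$. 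Since $[h_n^2]_{\mathrm p}$ is $R$-linear on $\Lambda_R=R[h_1,h_2,\ldots]$, applying it to \eqref{eq-maximal-self-pair-relation} yields
\[
c\,s_{\partial\tau}^2=0\quad\text{in }\Lambda_R.
\]

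\emph{Conclusion.} I expect this to be the only step needing care, because $R$ is arbitrary and may have zero divisors. It is therefore not enough to know that $s_{\partial\tau}^2\ne0$; I need a Schur coefficient of $s_{\partial\tau}^2$ equal to $1$. Put $\mu=\partial\tau$. The argument inside the proof of Corollary~\ref{cor-top-product} shows that $c^{\mu+\mu}_{\mu,\mu}=1$: the Littlewood--Richardson tableau filling each added box in row $i$ with $i$ is the unique one. If $\mu=\varnothing$ this is just $s_\varnothing^2=1$. Taking the coefficient of $s_{2\mu}$ in $c\,s_\mu^2=0$, and using that the Schur functions form an $R$-basis of $\Lambda_R$, gives $c\cdot1=0$. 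Hence $c=0$.
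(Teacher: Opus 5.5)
Your proposal is correct and follows the paper's proof essentially step for step: you extract the $h_n^2$-coefficient using Lemma~\ref{lem-jt-derivative}(i),(ii), observe that every other product lies in $R[h_j\mid j<n]$, and deduce $c=0$ from $c^{\partial\tau+\partial\tau}_{\partial\tau,\partial\tau}=1$, which, as you note, is needed because $R$ may have zero divisors. Your explicit expansion $s_\tau=\varepsilon s_{\partial\tau}h_n+A$ simply spells out the paper's identity $[h_n^2]_{\mathrm p}s_\tau^2=\bigl([h_n]_{\mathrm p}s_\tau\bigr)^2$.
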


\begin{proof}
By Lemma~\ref{lem-jt-derivative}\textup{(i)} and \textup{(ii)},
\[
[h_n^2]_{\mathrm{p}}s_\tau^2
=
\bigl([h_n]_{\mathrm{p}}s_\tau\bigr)^2
=
s_{\partial\tau}^2.
\]

Let $\lambda$ be one of the partitions $\alpha^{(i)}$ or $\beta^{(i)}$. If $\lambda=\varnothing$, then $s_\lambda=1$.  If $\lambda\ne\varnothing$, then $\lambda_1+\ell(\lambda)-1<n$ by hypothesis.  Taking $r=\ell(\lambda)$ in \eqref{eq-jacobi-trudi}, every entry in the determinant for $s_\lambda$ is of the form $h_{\lambda_a-a+b}$, where $1\le a,b\le \ell(\lambda)$.  Its index satisfies $\lambda_a-a+b\le \lambda_1+\ell(\lambda)-1<n$, while entries with negative index are $0$ and entries with index $0$ are $1$.  Hence $s_\lambda\in R[h_j\mid 1\le j<n]$.  Therefore $[h_n^2]_{\mathrm p}s_{\alpha^{(i)}}s_{\beta^{(i)}}=0$ for every $i$.

Thus, applying $[h_n^2]_{\mathrm p}$ to \eqref{eq-maximal-self-pair-relation} gives $c\,s_{\partial\tau}^2=0$. By the Littlewood--Richardson rule, $[s_{\partial\tau+\partial\tau}]s_{\partial\tau}^2$ counts Littlewood--Richardson tableaux of shape $(\partial\tau+\partial\tau)/(\partial\tau)$ and content $\partial\tau$. There is exactly one such tableau, since the lattice condition forces the boxes added in row $a$ to be filled with $a$, and this row-constant filling is semistandard and has lattice reading word because $\partial\tau$ is a partition.  Thus $c^{\partial\tau+\partial\tau}_{\partial\tau,\partial\tau}=1$.  Hence
\[
0=[s_{\partial\tau+\partial\tau}]\bigl(c\,s_{\partial\tau}^2\bigr)
=
c\,c^{\partial\tau+\partial\tau}_{\partial\tau,\partial\tau}
=
c.\qedhere
\]
\end{proof}

\subsection{Proof of the core theorem}

The derivative-reduced splitting family and the maximal self-pair lemma now give the proof of Theorem~\ref{thm-core}.

\begin{proof}[Proof of Theorem~\ref{thm-core}.]
Suppose, for contradiction, that there is a nontrivial relation
\begin{equation}
\label{eq-core-relation}
\sum_{\{\alpha,\beta\}\in\Split(\theta)}
c_{\alpha,\beta}\,s_\alpha s_\beta=0.
\end{equation}
If $\theta=\varnothing$, then the only $\theta$-splitting is $\{\varnothing,\varnothing\}$, and \eqref{eq-core-relation} becomes $c_{\varnothing,\varnothing}s_\varnothing^2=0$.  Since $s_\varnothing=1$, this implies $c_{\varnothing,\varnothing}=0$.  Thus \eqref{eq-core-relation} is trivial, contrary to assumption. Hence $\theta\ne\varnothing$, so every $\theta$-splitting has at least one nonempty part. Set
\[
n=\max\{\delta_1+\ell(\delta)-1\mid
\{\alpha,\beta\}\in\Split(\theta),\ c_{\alpha,\beta}\ne0,\
\delta\in\{\alpha,\beta\},\ \delta\ne\varnothing\}.
\]
The maximum is taken over a nonempty finite set, since \eqref{eq-core-relation} is nontrivial and every $\theta$-splitting has at least one nonempty part. Let
\[
\operatorname{High}_n=
\{\delta\ne\varnothing\mid
\delta_1+\ell(\delta)-1=n,\
\delta\in\{\alpha,\beta\}\text{ for some }
\{\alpha,\beta\}\in\Split(\theta)\text{ with }c_{\alpha,\beta}\ne0\}.
\]
Then $\operatorname{High}_n\ne\varnothing$ by the definition of $n$.

Applying $\partial/\partial h_n$ to \eqref{eq-core-relation} gives
\begin{equation}\label{eq-core-derivative-product-rule}
0=
\sum_{\{\alpha,\beta\}\in\Split(\theta)}
c_{\alpha,\beta}\left(
\frac{\partial s_\alpha}{\partial h_n}s_\beta
+
s_\alpha\frac{\partial s_\beta}{\partial h_n}
\right).
\end{equation}
Consider one of the two summands coming from a fixed $\{\alpha,\beta\}\in\Split(\theta)$ with $c_{\alpha,\beta}\ne0$.  Let $s_\delta$ be the differentiated factor and let $s_\gamma$ be the other factor; explicitly, $(\gamma,\delta)=(\beta,\alpha)$ for the first summand inside the parentheses in \eqref{eq-core-derivative-product-rule}, and $(\gamma,\delta)=(\alpha,\beta)$ for the second.  Then $\gamma+\delta=\theta$.  If $\delta=\varnothing$, then the differentiated factor is $s_\varnothing=1$, so the summand is zero.  If $\delta\ne\varnothing$, then the maximality of $n$ and Lemma~\ref{lem-jt-derivative}\textup{(iii)} show that the summand is zero unless $\delta\in\operatorname{High}_n$.  In the remaining case, Lemma~\ref{lem-jt-derivative}\textup{(iii)} gives the contribution
\[
(-1)^{\ell(\delta)-1}c_{\gamma,\delta}\,
s_\gamma s_{\partial\delta},
\]
where $c_{\gamma,\delta}$ is the coefficient $c_{\alpha,\beta}$ in \eqref{eq-core-derivative-product-rule} for the unordered splitting $\{\alpha,\beta\}=\{\gamma,\delta\}$.

Substituting these surviving contributions into \eqref{eq-core-derivative-product-rule} gives
\begin{equation}
\label{eq-core-derivative-oriented}
0=
\sum_{\substack{(\gamma,\delta):\,\gamma+\delta=\theta\\
\gamma\ne\delta,\ \delta\in\operatorname{High}_n}}
(-1)^{\ell(\delta)-1}c_{\gamma,\delta}\,
s_\gamma s_{\partial\delta}
+
\sum_{\substack{\tau\in\operatorname{High}_n\\ \tau+\tau=\theta}}
2(-1)^{\ell(\tau)-1}c_{\tau,\tau}\,
s_\tau s_{\partial\tau}.
\end{equation}
The factor $2$ in the second sum comes from the case $\alpha=\beta$, where the two summands inside the parentheses in \eqref{eq-core-derivative-product-rule} are equal.

Every indexed product in \eqref{eq-core-derivative-oriented} satisfies the hypotheses of Lemma~\ref{lem-oriented-block}.  For the first sum, use the ordered pair $(\gamma,\delta)$; since $\delta\in\operatorname{High}_n$, we have $\delta\ne\varnothing$ and $\delta_1+\ell(\delta)-1=n$.  For the second sum, use the ordered pair $(\tau,\tau)$, and the same conditions follow from $\tau\in\operatorname{High}_n$.  Thus Lemma~\ref{lem-oriented-block} applies to \eqref{eq-core-derivative-oriented}, so every coefficient in \eqref{eq-core-derivative-oriented} is zero.  In particular,
\begin{equation}\label{eq-core-unequal-max-vanishing}
c_{\gamma,\delta}=0
\quad
\text{whenever } \gamma\ne\delta,\ \gamma+\delta=\theta,\
\delta\in\operatorname{High}_n.
\end{equation}
Equation \eqref{eq-core-unequal-max-vanishing} eliminates every $\delta\in\operatorname{High}_n$ whose other part in the $\theta$-splitting is different from $\delta$.  If no $\tau\in\operatorname{High}_n$ satisfied $\tau+\tau=\theta$, this would imply $\operatorname{High}_n = \varnothing$, which is a contradiction. Hence there exists $\tau\in\operatorname{High}_n$ with $\tau+\tau=\theta$. It is unique, since $\tau+\tau=\theta$ determines $\tau$.  For this remaining case, Lemma~\ref{lem-oriented-block} applied to \eqref{eq-core-derivative-oriented} gives only $2(-1)^{\ell(\tau)-1}c_{\tau,\tau}=0$, which does not imply $c_{\tau,\tau}=0$ over an arbitrary ring $R$.

By \eqref{eq-core-unequal-max-vanishing}, all terms in \eqref{eq-core-relation} whose indexing splitting contains an element of $\operatorname{High}_n$ are zero except possibly $c_{\tau,\tau}s_\tau^2$. Therefore \eqref{eq-core-relation} has the form
\[
c_{\tau,\tau}s_\tau^2
+
\sum_{\substack{\{\alpha,\beta\}\in\Split(\theta)\\
\text{each nonempty }\delta\in\{\alpha,\beta\}\text{ satisfies }
\delta_1+\ell(\delta)-1<n}}
c_{\alpha,\beta}\,s_\alpha s_\beta
=0.
\]
Since $n=\tau_1+\ell(\tau)-1$, Lemma~\ref{lem-maximal-self-pair} gives
\begin{equation}\label{eq-core-self-max-vanishing}
c_{\tau,\tau}=0.
\end{equation}
Equations \eqref{eq-core-unequal-max-vanishing} and \eqref{eq-core-self-max-vanishing} imply $\operatorname{High}_n=\varnothing$, since any $\delta\in\operatorname{High}_n$ belongs to a splitting $\{\gamma,\delta\}$ with nonzero coefficient, while \eqref{eq-core-unequal-max-vanishing} eliminates the case $\gamma\ne\delta$ and uniqueness of $\tau$ together with \eqref{eq-core-self-max-vanishing} eliminates the case $\gamma=\delta$. This contradicts $\operatorname{High}_n\ne\varnothing$.  Hence no nontrivial relation \eqref{eq-core-relation} exists.
\end{proof}

\section{Rectangular complements} \label{sec-rectangular}

We now use Theorem~\ref{thm-core} to prove Theorem~\ref{thm-rectangular}.  The remaining point is to show that, after successively removing the saturated first rows determined by a fixed gap vector, the rectangular complement products become one of the componentwise splitting families covered by Theorem~\ref{thm-core}.

If $a=1$, then the rectangular complementary pairs are precisely the $(b)$-splittings, so Theorem~\ref{thm-rectangular} follows directly from Theorem~\ref{thm-core}.  We may therefore assume that $a\ge2$.

Fix integers $a\ge2$ and $b\ge1$, and put $m=\lfloor a/2\rfloor$.  For $\lambda\subseteq (b^a)$, define
\[
\operatorname{gap}(\lambda)
=
(\lambda_1-\lambda_a,\lambda_2-\lambda_{a-1},\ldots,
\lambda_m-\lambda_{a+1-m}).
\]
The coordinates of $\operatorname{gap}(\lambda)$ are weakly decreasing and nonnegative, so $\operatorname{gap}(\lambda)$ is a partition.  Moreover,
\[
\operatorname{gap}(\lambda^\vee)
=
\bigl((b-\lambda_a)-(b-\lambda_1),\ldots,
(b-\lambda_{a+1-m})-(b-\lambda_m)\bigr)
=
\operatorname{gap}(\lambda).
\]

For a partition $\rho$ and an integer $r\ge0$, set $\rho^{\downarrow r}=(\rho_{r+1},\rho_{r+2},\ldots,\rho_{\ell(\rho)})$.  For $\lambda\subseteq(b^a)$, we call $\lambda^{\downarrow m}$ its tail.

Now fix a gap vector $g=(g_1,\ldots,g_m)$.  If $\operatorname{gap}(\lambda)=g$, then
\begin{equation}\label{eq-fixed-gap-tail-sum}
\lambda^{\downarrow m}+(\lambda^\vee)^{\downarrow m}
=
\begin{cases}
(b-g_m,b-g_{m-1},\ldots,b-g_1), & a=2m,\\
(b,b-g_m,b-g_{m-1},\ldots,b-g_1), & a=2m+1
\end{cases}
=:\Theta_g.
\end{equation}
In particular, the sum $\lambda^{\downarrow m}+(\lambda^\vee)^{\downarrow m}$ depends only on $\operatorname{gap}(\lambda)$.

For $g=(g_1,\ldots,g_m)$, let
\[
\Pi_g=\pi_{b+g_m}\circ\cdots\circ\pi_{b+g_2}\circ\pi_{b+g_1},
\]
where $\pi_n$ is the first-row projection from Definition~\ref{def-first-row-projection}.

\begin{example} Let $a=5$, $b=7$, and $\lambda=(6,5,4,3,2)$.  Then $m=2$ and
\[
\operatorname{gap}(\lambda)=(6-2,5-3)=(4,2).
\]
The rectangular complement is $\lambda^\vee=(5,4,3,2,1)$, and $\operatorname{gap}(\lambda^\vee)=(4,2)$ as well.  For $g=(4,2)$, we have
\[
\lambda^{\downarrow m}=(4,3,2),
\qquad
(\lambda^\vee)^{\downarrow m}=(3,2,1),
\qquad
\lambda^{\downarrow m}+(\lambda^\vee)^{\downarrow m}=(7,5,3).
\]

In this case $\Pi_g=\pi_9\circ\pi_{11}$.  Since $6+5=11$, Lemma~\ref{lem-top-row-projection} gives
\[
\pi_{11}(s_\lambda s_{\lambda^\vee})
=
s_{(5,4,3,2)}s_{(4,3,2,1)}.
\]
Since $5+4=9$, applying Lemma~\ref{lem-top-row-projection} again gives
\[
\Pi_g(s_\lambda s_{\lambda^\vee})
=
\pi_9\bigl(s_{(5,4,3,2)}s_{(4,3,2,1)}\bigr)
=
s_{(4,3,2)}s_{(3,2,1)}
=
s_{\lambda^{\downarrow m}}s_{(\lambda^\vee)^{\downarrow m}}.
\]
Thus, in this example, applying $\Pi_g$ gives $s_{\lambda^{\downarrow m}}s_{(\lambda^\vee)^{\downarrow m}}$.  The next lemma proves that this holds for every $\lambda$ with fixed gap vector $g$, and that the resulting tail pairs have a componentwise sum that depends only on $g$. \end{example}

\begin{lemma}\label{lem-gap-block} Fix a gap vector $g=(g_1,\ldots,g_m)$.

\begin{enumerate} \item[(i)] If $\lambda\subseteq(b^a)$ satisfies $\operatorname{gap}(\lambda)=g$, then $\Pi_g(s_\lambda s_{\lambda^\vee}) =s_{\lambda^{\downarrow m}}s_{(\lambda^\vee)^{\downarrow m}}$, and $\{\lambda^{\downarrow m},(\lambda^\vee)^{\downarrow m}\}\in\Split(\Theta_g)$.

\item[(ii)] If $\lambda\subseteq(b^a)$ satisfies $\operatorname{gap}(\lambda)<_{\mathrm{lex}} g$, then $\Pi_g(s_\lambda s_{\lambda^\vee})=0$. \end{enumerate}
\end{lemma}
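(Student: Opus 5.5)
The plan is to apply the projections $\pi_{b+g_1},\pi_{b+g_2},\ldots$ one at a time and follow the product $s_\lambda s_{\lambda^\vee}$ through each step. For $0\le k\le m$, put $\mu=\lambda^{\downarrow k}$ and $\nu=(\lambda^\vee)^{\downarrow k}$. Then
\[
\mu_1=\lambda_{k+1},\qquad \nu_1=(\lambda^\vee)_{k+1}=b-\lambda_{a-k},
\]
so that
\[
\mu_1+\nu_1=b+(\lambda_{k+1}-\lambda_{a-k})=b+\operatorname{gap}(\lambda)_{k+1}.
\]
By Corollary~\ref{cor-top-product}, $\Top(s_\mu s_\nu)=b+\operatorname{gap}(\lambda)_{k+1}$. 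Lemma~\ref{lem-top-row-projection} then gives
\[
\pi_{b+\operatorname{gap}(\lambda)_{k+1}}(s_\mu s_\nu)=s_{\bar\mu}s_{\bar\nu},
\qquad \bar\mu=\lambda^{\downarrow k+1},\quad \bar\nu=(\lambda^\vee)^{\downarrow k+1}.
\]

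For part (i), assume $\operatorname{gap}(\lambda)=g$ and induct on $k$. After applying $\pi_{b+g_k}\circ\cdots\circ\pi_{b+g_1}$ we have exactly $s_{\lambda^{\downarrow k}}s_{(\lambda^\vee)^{\downarrow k}}$. At the next step the projection index $b+g_{k+1}$ equals the top first part, so the identity above applies. Taking $k=m$ gives the stated formula. The splitting claim is \eqref{eq-fixed-gap-tail-sum}, which says that the componentwise sum of the two tails is $\Theta_g$. Since both tails are partitions, $\{\lambda^{\downarrow m},(\lambda^\vee)^{\downarrow m}\}\in\Split(\Theta_g)$. One point needs checking: the projections are applied to the Schur expansion, and Lemma~\ref{lem-top-row-projection} gives the image of the full product, so no care is needed about other terms.

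For part (ii), let $j$ be the first index with $\operatorname{gap}(\lambda)_j\ne g_j$, so $\operatorname{gap}(\lambda)_j<g_j$. For $k<j$, the indices agree, and the argument of part (i) shows that after the first $j-1$ projections we have $s_{\lambda^{\downarrow j-1}}s_{(\lambda^\vee)^{\downarrow j-1}}$. By Corollary~\ref{cor-top-product}, every Schur function in this product has first part at most
\[
b+\operatorname{gap}(\lambda)_j<b+g_j.
\]
Hence $\pi_{b+g_j}$ kills every term, and the subsequent projections preserve $0$ by linearity.

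I expect little obstacle. The main points are the index bookkeeping $(\lambda^\vee)_{k+1}=b-\lambda_{a-k}$, valid for $k<m\le a/2$, and the degenerate cases. If a tail is empty, Lemma~\ref{lem-top-row-projection} still applies, since it allows possibly empty partitions and $\Top$ is computed from first parts equal to zero. If $s_\mu s_\nu=1$, then the top is $0$, and $\pi_0(1)=1$ is consistent.
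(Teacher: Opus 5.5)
Your proposal is correct and follows the paper's proof essentially step for step. In part (i) you induct through the successive projections via Lemma~\ref{lem-top-row-projection}, using that the first rows $\lambda_{k+1}$ and $b-\lambda_{a-k}$ sum to $b+\operatorname{gap}(\lambda)_{k+1}$; in part (ii) you apply the first-row bound at the first index where $\operatorname{gap}(\lambda)$ and $g$ differ, citing Corollary~\ref{cor-top-product} where the paper cites Lemma~\ref{lem-row-removal}\textup{(i)}, which is an immaterial difference.
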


\begin{proof}
By \eqref{eq-fixed-gap-tail-sum}, $\{\lambda^{\downarrow m},(\lambda^\vee)^{\downarrow m}\}\in \Split(\Theta_g)$.

We prove by induction on $r$, for $0\le r\le m$, that applying $\pi_{b+g_r}\circ\cdots\circ\pi_{b+g_1}$ to $s_\lambda s_{\lambda^\vee}$ gives $s_{\lambda^{\downarrow r}}s_{(\lambda^\vee)^{\downarrow r}}$, with the case $r=0$ interpreted as the identity.  Suppose $1\le r\le m$ and the claim holds for $r-1$.  The first rows of $\lambda^{\downarrow(r-1)}$ and $(\lambda^\vee)^{\downarrow(r-1)}$ are $\lambda_r$ and $b-\lambda_{a+1-r}$, whose sum is $b+g_r$.  Lemma~\ref{lem-top-row-projection} therefore gives
\[
\pi_{b+g_r}\bigl(
s_{\lambda^{\downarrow(r-1)}}s_{(\lambda^\vee)^{\downarrow(r-1)}}
\bigr)
=
s_{\lambda^{\downarrow r}}s_{(\lambda^\vee)^{\downarrow r}}.
\]
This proves the induction.  The case $r=m$ is precisely $\Pi_g(s_\lambda s_{\lambda^\vee}) =s_{\lambda^{\downarrow m}}s_{(\lambda^\vee)^{\downarrow m}}$.

Now suppose $h=\operatorname{gap}(\lambda)<_{\mathrm{lex}}g$, and let $j$ be the first index such that $h_j\ne g_j$.  Then $h_i=g_i$ for $i<j$ and $h_j<g_j$.  Applying the first $j-1$ projections gives $s_{\lambda^{\downarrow(j-1)}}s_{(\lambda^\vee)^{\downarrow(j-1)}}$ by the induction just proved, with $h$ in place of $g$.  The first rows of $\lambda^{\downarrow(j-1)}$ and $(\lambda^\vee)^{\downarrow(j-1)}$ are $\lambda_j$ and $b-\lambda_{a+1-j}$, whose sum is $b+h_j$.  Hence the first-row bound in Lemma~\ref{lem-row-removal} implies that every Schur term in $s_{\lambda^{\downarrow(j-1)}}s_{(\lambda^\vee)^{\downarrow(j-1)}}$ has first part at most $b+h_j$.  Since $b+h_j<b+g_j$, applying $\pi_{b+g_j}$ gives zero.  Therefore $\Pi_g(s_\lambda s_{\lambda^\vee})=0$. \end{proof}

\begin{lemma}\label{lem-fixed-gap-injective}
Fix a gap vector $g=(g_1,\ldots,g_m)$.  For each $\lambda\subseteq(b^a)$ with $\operatorname{gap}(\lambda)=g$, the map
\[
\{\lambda,\lambda^\vee\}
\longmapsto
\{\lambda^{\downarrow m},(\lambda^\vee)^{\downarrow m}\}
\]
is injective.
\end{lemma}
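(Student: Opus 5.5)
The plan is to reconstruct an unordered complementary pair $\{\lambda,\lambda^\vee\}$ from the gap vector $g$ and the unordered pair of tails $\{\lambda^{\downarrow m},(\lambda^\vee)^{\downarrow m}\}$. Since $g$ is fixed, this is exactly the injectivity asserted.

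Suppose $\lambda,\mu\subseteq(b^a)$ both have gap vector $g$, and that
\[
\{\lambda^{\downarrow m},(\lambda^\vee)^{\downarrow m}\}=\{\mu^{\downarrow m},(\mu^\vee)^{\downarrow m}\}.
\]
After possibly replacing $\mu$ by $\mu^\vee$ (which has the same gap vector and the same unordered pair), we may assume $\lambda^{\downarrow m}=\mu^{\downarrow m}$. It then suffices to show that $\lambda=\mu$. The tail $\lambda^{\downarrow m}$, padded with zeros to length $a-m$, records the entries $\lambda_{m+1},\ldots,\lambda_a$. Here one must be careful about trailing zero parts, since $\lambda$ is viewed inside the rectangle with exactly $a$ coordinates. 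Padding both $\lambda$ and $\mu$ to $a$ entries, equality of the tails as partitions gives $\lambda_i=\mu_i$ for all $m<i\le a$.

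The top rows are then forced by the gap vector. For $1\le i\le m$ we have $a+1-i\ge a+1-m>m$, so the index $a+1-i$ lies in the tail range. Hence
\[
\lambda_i=g_i+\lambda_{a+1-i}=g_i+\mu_{a+1-i}=\mu_i.
\]
This gives $\lambda=\mu$, and therefore $\{\lambda,\lambda^\vee\}=\{\mu,\mu^\vee\}$.

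The only delicate point is the inequality $a+1-m>m$, which ensures every partner index of the first $m$ rows lies in the tail; it holds because $m=\lfloor a/2\rfloor$ gives $2m\le a$. Everything else is bookkeeping, so no real obstacle is expected beyond this zero-padding and index check.
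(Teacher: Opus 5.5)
Your proof is correct and follows the paper's argument. The paper treats the cases $\lambda^{\downarrow m}=\mu^{\downarrow m}$ and $\lambda^{\downarrow m}=(\mu^\vee)^{\downarrow m}$ separately, and you merge them by replacing $\mu$ with $\mu^\vee$; this is legitimate because $\operatorname{gap}(\mu^\vee)=\operatorname{gap}(\mu)$ and both unordered pairs are unchanged. As in the paper, you then recover the first $m$ rows from the tail via $\lambda_i=g_i+\lambda_{a+1-i}$, using that $a+1-i>m$.
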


\begin{proof}
Suppose $\lambda,\mu\subseteq(b^a)$ have gap vector $g$ and
\[
\{\lambda^{\downarrow m},(\lambda^\vee)^{\downarrow m}\}
=
\{\mu^{\downarrow m},(\mu^\vee)^{\downarrow m}\}.
\]
Thus $\lambda^{\downarrow m}=\mu^{\downarrow m}$ or $\lambda^{\downarrow m}=(\mu^\vee)^{\downarrow m}$.

\noindent\textit{Case 1.} $\lambda^{\downarrow m}=\mu^{\downarrow m}$. Then $\lambda_i=\mu_i$ for all $i>m$.  Since $m=\lfloor a/2\rfloor$, the index $a+1-i$ is greater than $m$ whenever $1\le i\le m$.  Hence $\lambda_{a+1-i}=\mu_{a+1-i}$ for $1\le i\le m$.  Since $\operatorname{gap}(\lambda)=\operatorname{gap}(\mu)=g$, we have $\lambda_i=g_i+\lambda_{a+1-i}$ and $\mu_i=g_i+\mu_{a+1-i}$ for $1\le i\le m$.  Thus $\lambda_i=\mu_i$ for $1\le i\le m$, and therefore $\lambda=\mu$.

\noindent\textit{Case 2.} $\lambda^{\downarrow m}=(\mu^\vee)^{\downarrow m}$. Then $\lambda_i=(\mu^\vee)_i$ for all $i>m$.  Since $m=\lfloor a/2\rfloor$, the index $a+1-i$ is greater than $m$ whenever $1\le i\le m$.  Hence $\lambda_{a+1-i}=(\mu^\vee)_{a+1-i}$ for $1\le i\le m$.  Also $\operatorname{gap}(\mu^\vee)=\operatorname{gap}(\mu)=g$, so $\operatorname{gap}(\lambda)=\operatorname{gap}(\mu^\vee)=g$.  Therefore $\lambda_i=g_i+\lambda_{a+1-i}$ and $(\mu^\vee)_i=g_i+(\mu^\vee)_{a+1-i}$ for $1\le i\le m$.  It follows that $\lambda_i=(\mu^\vee)_i$ for $1\le i\le m$, and hence $\lambda=\mu^\vee$.

In either case, $\{\lambda,\lambda^\vee\}=\{\mu,\mu^\vee\}$, so the map is injective.
\end{proof}

We now prove the rectangular theorem by choosing a lexicographically maximal gap vector in a hypothetical relation and projecting onto its fixed-gap block.

\begin{proof}[Proof of Theorem~\ref{thm-rectangular}.]
Suppose, for contradiction, that there is a nontrivial relation
\begin{equation}
\label{eq-rectangular-relation}
\sum_{\{\lambda,\lambda^\vee\}} c_\lambda\,s_\lambda s_{\lambda^\vee}=0,
\end{equation}
where the sum is over unordered complementary pairs.  Since $\operatorname{gap}(\lambda)=\operatorname{gap}(\lambda^\vee)$, the set
\[
\mathcal G
=
\{\operatorname{gap}(\lambda)\mid c_\lambda\ne0\}
\]
is well-defined and nonempty.  Choose a lexicographically maximal element $g=(g_1,\ldots,g_m)$ of $\mathcal G$, and let
\[
\mathcal B_g
=
\{\{\lambda,\lambda^\vee\}\mid c_\lambda\ne0,\
\operatorname{gap}(\lambda)=g\}.
\]

If $\{\lambda,\lambda^\vee\}\notin\mathcal B_g$ has nonzero coefficient, then $\operatorname{gap}(\lambda)<_{\mathrm{lex}}g$ by the choice of $g$.  Hence Lemma~\ref{lem-gap-block}\textup{(ii)} gives $\Pi_g(s_\lambda s_{\lambda^\vee})=0$.  Applying $\Pi_g$ to \eqref{eq-rectangular-relation} therefore gives
\begin{equation}\label{eq-rectangular-projected-relation}
0
=
\sum_{\{\lambda,\lambda^\vee\}\in\mathcal B_g}
c_\lambda\,\Pi_g(s_\lambda s_{\lambda^\vee})
=
\sum_{\{\lambda,\lambda^\vee\}\in\mathcal B_g}
c_\lambda\,
s_{\lambda^{\downarrow m}}s_{(\lambda^\vee)^{\downarrow m}},
\end{equation}
where the second equality is Lemma~\ref{lem-gap-block}\textup{(i)}.

By Lemma~\ref{lem-gap-block}\textup{(i)}, each unordered pair $\{\lambda^{\downarrow m},(\lambda^\vee)^{\downarrow m}\}$ lies in $\Split(\Theta_g)$.  By Lemma~\ref{lem-fixed-gap-injective}, these unordered pairs are distinct as $\{\lambda,\lambda^\vee\}$ ranges over $\mathcal B_g$.  Since every coefficient indexed by $\mathcal B_g$ is nonzero, \eqref{eq-rectangular-projected-relation} is a nontrivial relation among products indexed by a subset of $\Split(\Theta_g)$, contradicting Theorem~\ref{thm-core}.
\end{proof}

\begin{proof}[Proof of Corollary~\ref{cor-koike-terada}.]
Suppose that there is a relation
\[
\sum_{\{\lambda,\lambda^\vee\}} c_\lambda\,
s_{[\lambda]}s_{[\lambda^\vee]}=0,
\]
where the sum is over unordered complementary pairs in $(b^a)$.  By the filtration property recalled above, the top homogeneous component of $s_{[\lambda]}$ is $s_\lambda$.  Since $|\lambda|+|\lambda^\vee|=ab$ for every $\lambda\subseteq(b^a)$, the homogeneous component of degree $ab$ in $s_{[\lambda]}s_{[\lambda^\vee]}$ is $s_\lambda s_{\lambda^\vee}$. Taking the degree-$ab$ homogeneous component of the displayed relation gives
\[
\sum_{\{\lambda,\lambda^\vee\}} c_\lambda\,
s_\lambda s_{\lambda^\vee}=0.
\]
By Theorem~\ref{thm-rectangular}, all coefficients $c_\lambda$ are zero. Thus the Koike--Terada products are linearly independent. \end{proof}

\section{Monomial complements} \label{sec-monomial}

This section proves Theorem~\ref{thm-monomial}; throughout, $\FF$ is a field of characteristic zero and $\FF^\times$ is the multiplicative group of nonzero elements of $\FF$.

Let $\mathsf{Par}_k$ be the partition lattice of $\{1,\ldots,k\}$, ordered by refinement.  Thus an element $\pi\in\mathsf{Par}_k$ is a collection of nonempty disjoint subsets, called \emph{blocks}, whose union is $\{1,\ldots,k\}$, and $\pi\le_{\mathrm{ref}}\sigma$ means that every block of $\pi$ is contained in a block of $\sigma$.

\begin{example}
In $\mathsf{Par}_4$, the set partition $\{\{1,3\},\{2,4\}\}$ has blocks $\{1,3\}$ and $\{2,4\}$.  Also $\{\{1\},\{2\},\{3,4\}\}\le_{\mathrm{ref}}\{\{1,2\},\{3,4\}\}$, since each block on the left is contained in a block on the right.
\end{example}

The minimum of $\mathsf{Par}_k$ is the all-singletons partition $\widehat0=\{\{1\},\ldots,\{k\}\}$, and its maximum is the one-block partition $\widehat1=\{\{1,\ldots,k\}\}$.  For $\sigma\in\mathsf{Par}_k$, write $p\sim_\sigma q$ if $p$ and $q$ lie in the same block.  Let $\mu$ be the M\"obius function of the poset $\mathsf{Par}_k$.

\begin{lemma}
\label{lem-monomial-top-p} Let $\alpha= (\alpha_1,\ldots,\alpha_k)$ be a nonempty partition such that $|\alpha|=d$.  Then
\[
m_\alpha=\kappa_\alpha p_d+Q_\alpha,
\qquad
Q_\alpha\in \FF[p_1,\ldots,p_{d-1}],
\]
with $\kappa_\alpha\in\FF^\times$.
\end{lemma}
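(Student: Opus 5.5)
We express $m_\alpha$ in power sums using Möbius inversion on $\mathsf{Par}_k$, and then read off the coefficient of $p_d$.

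For $\sigma\in\mathsf{Par}_k$ with blocks $B_1,\ldots,B_s$, set $p_{\alpha,\sigma}=\prod_{j=1}^s p_{\alpha(B_j)}$, where $\alpha(B)=\sum_{i\in B}\alpha_i$. Expanding the product of power sums over all index functions $f\colon\{1,\ldots,k\}\to\ZZ_{>0}$ gives
\[
p_{\alpha,\sigma}=\sum_{f}\prod_i x_{f(i)}^{\alpha_i},
\]
with the sum over those $f$ that are constant on each block of $\sigma$. Grouping the $f$ by their kernel $\pi$ (the set partition with $p\sim_\pi q$ iff $f(p)=f(q)$) yields $p_{\alpha,\sigma}=\sum_{\pi\ge_{\mathrm{ref}}\sigma}\tilde m_{\alpha,\pi}$. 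Here $\tilde m_{\alpha,\pi}$ is the sum over injective-on-blocks assignments. In particular,
\[
\tilde m_{\alpha,\widehat0}=\sum_{f\text{ injective}}\prod_i x_{f(i)}^{\alpha_i}=N_\alpha\, m_\alpha,
\]
where $N_\alpha=\prod_j r_j!$ and $r_j$ is the multiplicity of the part $j$ in $\alpha$. This holds because each distinct monomial arises from exactly $N_\alpha$ injective $f$.

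Möbius inversion on the interval above $\widehat0$ then gives
\[
N_\alpha m_\alpha=\sum_{\sigma\in\mathsf{Par}_k}\mu(\widehat0,\sigma)\,p_{\alpha,\sigma}.
\]
Every block sum satisfies $\alpha(B)\le d$, with equality only when $B=\{1,\ldots,k\}$. So only $\sigma=\widehat1$ contributes a term involving $p_d$, and that term is exactly $\mu(\widehat0,\widehat1)p_d$. Every other $p_{\alpha,\sigma}$ lies in $\ZZ[p_1,\ldots,p_{d-1}]$, since each of its factors is some $p_e$ with $1\le e\le d-1$ (the $\alpha_i$ are positive).

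This gives $\kappa_\alpha=\mu(\widehat0,\widehat1)/N_\alpha$. The classical value $\mu_{\mathsf{Par}_k}(\widehat0,\widehat1)=(-1)^{k-1}(k-1)!$ shows $\kappa_\alpha=(-1)^{k-1}(k-1)!/N_\alpha$. This is a nonzero rational number, hence lies in $\FF^\times$ because $\operatorname{char}\FF=0$; dividing by $N_\alpha$ is likewise legitimate for the same reason.

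The only step that needs care is the kernel bookkeeping, namely the identity $p_{\alpha,\sigma}=\sum_{\pi\ge\sigma}\tilde m_{\alpha,\pi}$ and the identification of $\tilde m_{\alpha,\widehat0}$ with $N_\alpha m_\alpha$. The Möbius value can simply be cited as standard. If a self-contained check is preferred, it follows from $\sum_{\sigma}\mu(\widehat0,\sigma)=0$ together with the recursion obtained by conditioning on the block containing $k$.
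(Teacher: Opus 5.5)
Your proof is correct and follows the paper's argument essentially step for step. Your $p_{\alpha,\sigma}$ and $\tilde m_{\alpha,\pi}$ are the paper's $Y_\pi(\alpha)=\prod_{B\in\pi}p_{\alpha_B}$ and $X_\sigma(\alpha)$. The paper likewise uses $X_{\widehat0}(\alpha)=r_\alpha m_\alpha$, M\"obius inversion on $\mathsf{Par}_k$, and $\mu(\widehat0,\widehat1)=(-1)^{k-1}(k-1)!$ to obtain $\kappa_\alpha=(-1)^{k-1}(k-1)!/r_\alpha$.
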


\begin{proof}
Let $r_j(\alpha)$ be the multiplicity of the part $j$ in $\alpha$, and set $r_\alpha=\prod_{j\ge1}r_j(\alpha)!$.

For $\sigma\in\mathsf{Par}_k$, let $X_\sigma(\alpha)$ be the sum over $k$-tuples $(i_1,\ldots,i_k)$ of positive integers whose equality pattern is exactly $\sigma$, that is
\[
X_\sigma(\alpha)
=
\sum_{\substack{i_1,\ldots,i_k\ge1\\
u\sim_\sigma v \Longleftrightarrow i_u=i_v}}
x_{i_1}^{\alpha_1}\cdots x_{i_k}^{\alpha_k}.
\]
Thus, $X_{\widehat0}(\alpha)$ is the sum over $k$-tuples with pairwise distinct entries.  Every monomial occurring in this sum has exponent partition $\alpha$, so it occurs in $m_\alpha$.  Conversely, every monomial occurring in $m_\alpha$ is obtained from such a tuple.  Two tuples occurring in $X_{\widehat0}(\alpha)$ give the same monomial exactly when one is obtained from the other by a permutation $\omega$ of $\{1,\ldots,k\}$ such that $\alpha_{\omega(u)}=\alpha_u$ for every $u$. There are $\prod_{j\ge1}r_j(\alpha)!=r_\alpha$ such permutations.  Hence each monomial of $m_\alpha$ appears $r_\alpha$ times in $X_{\widehat0}(\alpha)$, and therefore
\begin{equation}\label{eq:X0-m}
X_{\widehat0}(\alpha)=r_\alpha m_\alpha.
\end{equation}

For $\pi\in\mathsf{Par}_k$, let $Y_\pi(\alpha)$ be the sum where the equalities prescribed by $\pi$ are required, but no distinctness condition is imposed between different blocks
\[
Y_\pi(\alpha)
=
\sum_{\substack{i_1,\ldots,i_k\ge1\\
u\sim_\pi v\Rightarrow i_u=i_v}}
x_{i_1}^{\alpha_1}\cdots x_{i_k}^{\alpha_k}.
\]
If $B$ is a block of $\pi$, set $\alpha_B=\sum_{u\in B}\alpha_u$.  A $k$-tuple $(i_1,\ldots,i_k)$ satisfies the condition in the definition of $Y_\pi(\alpha)$ exactly when there is a choice of positive integers $j_B$, one for each block $B$ of $\pi$, such that $i_u=j_B$ for every $u\in B$. Different blocks may have the same value of $j_B$.  Thus
\[
Y_\pi(\alpha)
=
\sum_{\substack{j_B\ge1\\ B\in\pi}}
\prod_{B\in\pi}x_{j_B}^{\alpha_B}
=
\prod_{B\in\pi}\sum_{j_B\ge1}x_{j_B}^{\alpha_B}
=
\prod_{B\in\pi}p_{\alpha_B}.
\]

Now partition the defining sum for $Y_\pi(\alpha)$ disjointly according to exact equality pattern.  If a $k$-tuple $(i_1,\ldots,i_k)$ has exact equality pattern $\sigma$, then it satisfies the condition defining $Y_\pi(\alpha)$ exactly when $u\sim_\pi v$ implies $u\sim_\sigma v$.  Equivalently, every block of $\pi$ is contained in a block of $\sigma$, so $\pi\le_{\mathrm{ref}}\sigma$.  Therefore
\begin{equation}\label{eq:Ypi-Xsigma}
Y_\pi(\alpha)=\sum_{\sigma\ge_{\mathrm{ref}}\pi}X_\sigma(\alpha).
\end{equation}
Applying M\"obius inversion to \eqref{eq:Ypi-Xsigma} on $\mathsf{Par}_k$ \cite[Section~3.7, Proposition~3.7.2]{Stanley2012} gives
\[
X_{\widehat0}(\alpha)
=
\sum_{\pi\in\mathsf{Par}_k}\mu(\widehat0,\pi)Y_\pi(\alpha)
=
\sum_{\pi\in\mathsf{Par}_k}\mu(\widehat0,\pi)
\prod_{B\in\pi}p_{\alpha_B}.
\]
The one-block partition contributes $\mu(\widehat0,\widehat1)p_d=(-1)^{k-1}(k-1)!p_d$ \cite[Section~3.10, Example~3.10.4, equation~(3.37)]{Stanley2012}.  Every other partition $\pi$ has at least two blocks, so each block sum $\alpha_B$ is strictly between $0$ and $d$.  Therefore every other term lies in $\FF[p_1,\ldots,p_{d-1}]$.  Since $\FF$ has characteristic zero, $r_\alpha$ is invertible in $\FF$. Dividing \eqref{eq:X0-m} by $r_\alpha$ gives the result, with $\kappa_\alpha=(-1)^{k-1}(k-1)!/r_\alpha\in\FF^\times$.
\end{proof}

\begin{proof}[Proof of Theorem~\ref{thm-monomial}.]
We prove the linear independence statement over $\FF$.  Let
\[
\mathcal P^{\mathrm{rep}}_{a,b}
=
\left\{
\lambda\subseteq(b^a)\ \middle|\
|\lambda|>|\lambda^\vee|,\ \text{or }
|\lambda|=|\lambda^\vee|\text{ and }\lambda^\vee\le_{\mathrm{lex}}\lambda
\right\}.
\]
Thus $\lambda\mapsto\{\lambda,\lambda^\vee\}$ identifies $\mathcal P^{\mathrm{rep}}_{a,b}$ with the set of unordered rectangular complementary pairs, and $|\lambda|\ge ab/2$ for every $\lambda\in\mathcal P^{\mathrm{rep}}_{a,b}$.

Suppose, for contradiction, that there is a nontrivial relation
\begin{equation}\label{eq-monomial-relation}
\sum_{\lambda\in\mathcal P^{\mathrm{rep}}_{a,b}} c_\lambda m_\lambda m_{\lambda^\vee}=0
\end{equation}
over $\FF$.

Assume first that some nonzero coefficient occurs with $|\lambda|>ab/2$. Set
\[
e=\max\{|\lambda|\mid
\lambda\in\mathcal P^{\mathrm{rep}}_{a,b},\ c_\lambda\ne0\}.
\]
If $|\lambda|=e$, then $|\lambda^\vee|=ab-e<e$.  By Lemma~\ref{lem-monomial-top-p}, write $m_\lambda=\kappa_\lambda p_e+Q_\lambda$, where $\kappa_\lambda\ne0$ and $Q_\lambda\in\FF[p_1,\ldots,p_{e-1}]$.  Since $m_{\lambda^\vee}$ is homogeneous of degree $ab-e<e$, its power-sum expansion does not involve $p_e$.  Hence $[p_e]_{\mathrm p}(m_\lambda m_{\lambda^\vee}) =\kappa_\lambda m_{\lambda^\vee}$.

If $\mu\in\mathcal P^{\mathrm{rep}}_{a,b}$ and $|\mu|<e$, then $|\mu^\vee|\le|\mu|<e$, so neither $m_\mu$ nor $m_{\mu^\vee}$ involves $p_e$ in its power-sum expansion.  Applying $[p_e]_{\mathrm p}$ to \eqref{eq-monomial-relation} gives
\[
\sum_{\substack{\lambda\in\mathcal P^{\mathrm{rep}}_{a,b}\\ |\lambda|=e}}
c_\lambda\kappa_\lambda m_{\lambda^\vee}=0.
\]
The complements $\lambda^\vee$ appearing in this sum are distinct, and the monomial symmetric functions are linearly independent in $\Lambda_\FF$. Thus $c_\lambda=0$ for every $\lambda\in\mathcal P^{\mathrm{rep}}_{a,b}$ with $|\lambda|=e$, contradicting the definition of $e$.

Therefore no nonzero term in \eqref{eq-monomial-relation} has $|\lambda|>ab/2$.  Since $|\lambda|\ge ab/2$ for $\lambda\in\mathcal P^{\mathrm{rep}}_{a,b}$, if $ab$ is odd then every coefficient in \eqref{eq-monomial-relation} is zero, contrary to the assumption that it is nontrivial.  Hence assume $ab$ is even, and put $e=ab/2$.  Every remaining nonzero term in \eqref{eq-monomial-relation} has $|\lambda|=|\lambda^\vee|=e$.

For such $\lambda$, Lemma~\ref{lem-monomial-top-p} gives $m_\lambda=\kappa_\lambda p_e+Q_\lambda$ and $m_{\lambda^\vee}=\kappa_{\lambda^\vee}p_e+Q_{\lambda^\vee}$, where $\kappa_\lambda,\kappa_{\lambda^\vee}\ne0$ and neither $Q$-term involves $p_e$.  Applying $[p_e]_{\mathrm p}$ to \eqref{eq-monomial-relation} gives
\begin{equation}\label{eq-monomial-middle-coeff}
\sum_{\substack{\lambda\in\mathcal P^{\mathrm{rep}}_{a,b}\\ |\lambda|=e}}
c_\lambda\bigl(\kappa_\lambda Q_{\lambda^\vee}
+\kappa_{\lambda^\vee}Q_\lambda\bigr)=0.
\end{equation}

Let $(\Lambda_\FF)_e$ be the homogeneous component of degree $e$.  The monomial symmetric functions $m_\alpha$, $\alpha\vdash e$, form a basis of $(\Lambda_\FF)_e$.  Since $p_e=m_{(e)}$, the subspace $\FF p_e$ is the span of one basis vector.  Hence the cosets $m_\alpha+\FF p_e$, with $\alpha\vdash e$ and $\alpha\ne(e)$, form a basis of $(\Lambda_\FF)_e/\FF p_e$.

For each $\alpha\vdash e$, Lemma~\ref{lem-monomial-top-p} gives $Q_\alpha=m_\alpha-\kappa_\alpha p_e$, so $Q_\alpha+\FF p_e=m_\alpha+\FF p_e$.  Passing \eqref{eq-monomial-middle-coeff} to $(\Lambda_\FF)_e/\FF p_e$ gives
\begin{equation}\label{eq-monomial-middle-quotient}
\sum_{\substack{\lambda\in\mathcal P^{\mathrm{rep}}_{a,b}\\ |\lambda|=e}}
c_\lambda\bigl(
\kappa_\lambda(m_{\lambda^\vee}+\FF p_e)
+\kappa_{\lambda^\vee}(m_\lambda+\FF p_e)
\bigr)=0.
\end{equation}
We now show that every coefficient $c_\lambda$ appearing in \eqref{eq-monomial-middle-quotient} is zero, except possibly the coefficient of the pair $\{(e),(e)\}$. Fix $\lambda\in\mathcal P^{\mathrm{rep}}_{a,b}$ with $|\lambda|=e$ and $c_\lambda\ne0$.

If $\lambda\ne\lambda^\vee$ and neither partition is $(e)$, then the coset $m_\lambda+\FF p_e$ appears in the summand indexed by $\lambda$ with coefficient $c_\lambda\kappa_{\lambda^\vee}$.  It appears in no other summand, because the only complementary pair containing $\lambda$ is $\{\lambda,\lambda^\vee\}$, and $\mathcal P^{\mathrm{rep}}_{a,b}$ contains only one representative of this pair.  Hence $c_\lambda\kappa_{\lambda^\vee}=0$, so $c_\lambda=0$.

If the pair indexed by $\lambda$ is $\{(e),\mu\}$ with $\mu\ne(e)$, then $m_{(e)}+\FF p_e=0$, while $m_\mu+\FF p_e$ is a basis vector.  This basis vector appears only in the summand indexed by $\lambda$, with coefficient $c_\lambda\kappa_{(e)}$.  Hence $c_\lambda=0$.

If $\lambda=\lambda^\vee\ne(e)$, then the coset $m_\lambda+\FF p_e$ appears only in the summand indexed by $\lambda$, with coefficient $2c_\lambda\kappa_\lambda$.  Since $\operatorname{char}\FF=0$ and $\kappa_\lambda\ne0$, this gives $c_\lambda=0$.

The preceding coefficient comparisons show that $c_\lambda=0$ for every $\lambda\in\mathcal P^{\mathrm{rep}}_{a,b}$ with $|\lambda|=e$, except possibly when $\lambda=(e)$ and $\lambda^\vee=(e)$.  Returning to \eqref{eq-monomial-relation} after discussing the above cases, either no term remains, or the relation reduces to $c_{(e)}m_{(e)}^2=0$.  In the latter case, $c_{(e)}=0$ because $m_{(e)}\ne0$ and $\Lambda_\FF$ is an integral domain.  Hence every coefficient in \eqref{eq-monomial-relation} is zero, which is a contradiction.  Therefore the products are linearly independent over $\FF$.

It remains to prove the integral statement.  Taking $\FF=\QQ$, any $\ZZ$-linear relation among the same products in $\Lambda_\ZZ$ becomes a $\QQ$-linear relation in $\Lambda_\QQ$ after extension of scalars.  Hence all of its coefficients are zero.
\end{proof}

\begin{remark}
The characteristic-zero hypothesis is essential in Theorem~\ref{thm-monomial}. It is used to treat the power sums as polynomial coordinates, so that coefficient extraction with respect to a single $p_e$ is available.  It also enters Lemma~\ref{lem-monomial-top-p}, where one divides by $r_\alpha$, and in the equal-degree case of the proof of Theorem~\ref{thm-monomial}, where the term $2c_\lambda\kappa_\lambda m_\lambda$ occurs.  In positive characteristic, complementary monomial products can be linearly dependent. For example, in characteristic $2$,
\[
m_{(1)}^2=m_{(2)}+2m_{(1,1)}=m_{(2)}.
\]
Thus, for the one-row rectangle $(2)$, the complementary products $m_\varnothing m_{(2)}$ and $m_{(1)}m_{(1)}$ are equal.
\end{remark}

\printbibliography

\end{document}